\newtheorem{theorem}{Theorem}[section]
\newtheorem{proposition}[theorem]{Proposition}
\newtheorem{remark}[theorem]{Remark}
\newtheorem{lemma}[theorem]{Lemma}
\newtheorem{corollary}[theorem]{Corollary}
\newtheorem{definition}[theorem]{Definition}
\newcommand{\R}{\mathbb R}
\newcommand{\la}{{\lambda}}
\newcommand{\ep}{\varepsilon}
\newcommand{\ds}{\displaystyle}
\newcommand{\al}{\alpha}
\newcommand{\cR}{\mathbb R}
\numberwithin{equation}{section}
\begin{document}


\title[Instablity of solitons in gZK]
{Instability of solitons - revisited, II:\\
the supercritical Zakharov-Kuznetsov equation}

\author[L. G. Farah]{Luiz Gustavo Farah}
\address{Department of Mathematics\\UFMG\\Brazil}
\curraddr{}
\email{lgfarah@gmail.com}
\thanks{}

\author[J. Holmer]{Justin Holmer}
\address{Department of Mathematics\\Brown University\\ USA}
\email{holmer@math.brown.edu}
\thanks{}

\author[S. Roudenko]{Svetlana Roudenko}
\address{Department of Mathematics\\George Washington University\\ USA}
\curraddr{}
\email{roudenko@gwu.edu}
\thanks{}

\subjclass[2010]{Primary: 35Q53, 37K40, 37K45, 37K05}

\keywords{Zakharov-Kuznetsov equation, instability of solitons, monotonicity.}

\begin{abstract}
We revisit the phenomenon of instability of solitons in the two dimensional generalization of the Korteweg-de Vries equation, the generalized Zakharov-Kuznetsov (ZK) equation, $u_t + \partial_{x_1}(\Delta u + u^p) = 0,  (x_1,x_2) \in \R^2$. It is known that solitons are unstable in this two dimensional equation for nonlinearities $p > 3$. This was shown by Anne de Bouard in \cite{DeB} generalizing the arguments of Bona-Souganidis-Strauss in \cite{BSS} for the generalized KdV equation. In this paper, we use a different method to obtain the instability of solitons, namely, truncation and monotonicity properties. Not only does this approach simplify the proof, but it can also be useful for studying various other stability questions in the ZK equation as well as other generalizations of the KdV equation.
\end{abstract}

\maketitle

\tableofcontents

\section{Introduction}

In this paper we consider the two-dimensional generalized Zakharov-Kuznetsov equation:
\begin{equation}
\label{gZK}
\text{(gZK)} \qquad \qquad u_t + \partial_{x_1} \left( \Delta_{(x_1,x_2)} u + u^p \right) = 0, \quad p>3, \quad (x_1,x_2) \in \cR^2, \quad t \in \cR.
\end{equation}
The above equation with $p=2$, besides being the 2d extension of the well-known KdV equation, governs the behavior of weakly nonlinear ion-acoustic waves in plasma comprising of cold ions and hot isothermal electrons in the present of a uniform magnetic field \cite{MP-1, MP-2} and was originally derived by Zakharov and Kuznetsov to describe weakly magnetized ion-acoustic waves in a strongly magnetized plasma \cite{ZK}.

The equation \eqref{gZK} is the two-dimensional extension of the well-studied model describing, for example, the weakly nonlinear waves in shallow water, the Korteweg-de Vries (KdV) equation:
\begin{equation}
\label{gKdV}
\text{(KdV)} \qquad \qquad
u_t + \left( u_{xx} + u^p \right)_x = 0, \quad p=2, \quad x \in \cR, \quad t \in \cR. \qquad
\end{equation}
When other integer powers $p \neq 2$ are considered, it is referred to as the {\it generalized} KdV (gKdV) equation.
It is also possible to consider
non-integer powers $p>1$, however, the nonlinearity would have to be modified as $\partial_{x_1} (|u|^{p-1}u )$. For the odd powers, this would produce a slightly different equation, however, most of the theory would remain the same.
Despite its apparent universality, the gKdV equation is limited as a spatially one-dimensional model, and thus, various higher dimensional generalizations exist.

During their lifespan, the solutions $u(t, x_1,x_2)$ to the equation \eqref{gZK} conserve the mass and energy:
\begin{equation}\label{MC}
M[u(t)]=\int_{\cR^2} u^2(t)\, dx_1dx_2 = M[u(0)]
\end{equation}
and
\begin{equation}\label{EC}
E[u(t)]=\dfrac{1}{2}\int_{\cR^2}|\nabla u(t)|^2\;dx_1dx_2 - \dfrac{1}{4}\int_{\cR^2} u^{4}(t)\;dx_1dx_2 = E[u(0)].
\end{equation}

Similar to the gKdV equation,
for solutions $u(t,x,y)$ decaying at infinity on $\cR^2$ the following invariance holds
\begin{equation*}
\int_{\cR} u(t, x_1,x_2) \, dx_1 = \int_{\cR} u(0, x_1, x_2) \, dx_1,
\end{equation*}
which is obtained by integrating the original equation on $\cR$ in the first coordinate $x_1$.
\smallskip

One of the useful symmetries in the evolution equations is {\it the scaling invariance}, which states that an appropriately rescaled version of the original solution is also a solution of the equation. For the equation \eqref{gZK} it is
$$
u_\lambda(t, x_1,x_2)=\lambda^{\frac{2}{p-1}} u(\lambda^3t, \lambda x_1, \lambda x_2).
$$
This symmetry makes a specific Sobolev norm $\dot{H}^s$ invariant, i.e.,
\begin{equation*}
\|u(0,\cdot,\cdot) \|_{\dot{H}^s}=\lambda^{\frac{2}{p-1}+s-1} \|u_0\|_{\dot{H}^s},
\end{equation*}
and the index $s$ gives rise to the critical-type classification of equations.
For the gKdV equation \eqref{gKdV} the critical index is $s=\frac12-\frac2{p-1}$, and for the gZK equation \eqref{gZK} it is $s = 1- \frac2{p-1}$. When $s>0$ (in 2d gZK equation this corresponds to $p>3$), the equation \eqref{gZK} is often referred as the ($L^2$-) supercritical equation. The gZK equation has other invariances such as translation and dilation.

The gZK equation has a family of travelling waves (or solitary waves, which sometimes are referred even as solitons), and observe that they travel only in $x_1$ direction
\begin{equation}\label{Eq:TW}
u(t,x_1,x_2) = Q_c(x_1-ct, x_2)
\end{equation}
with $Q_c(x_1,x_2) \to 0$ as $|x| \to + \infty$. Here, $Q_c$ is the dilation of the ground state $Q$:
$$
Q_c(\vec{x}) = c^{1/{p-1}} Q (c^{1/2} \vec{x}), \quad \vec{x} = (x_1,x_2),
$$
with $Q$ being a radial positive solution in $H^1(\cR^2)$ of the well-known nonlinear elliptic equation $-\Delta Q+Q  - Q^p = 0$.
Note that $Q \in C^{\infty}(\R^2)$, $\partial_r Q(r) <0$ for any $r = |x|>0$ and
for any multi-index $\alpha$
\begin{equation}\label{prop-Q}
|\partial^\al Q(\vec{x})| \leq c(\al) e^{- |\vec{x}|} \quad \mbox{for any}\quad \vec{x} \in \cR^2.
\end{equation}

In this work, we are interested in stability properties of travelling waves in
the supercritical gZK equation \eqref{gZK}, i.e., in the behavior of solutions close to the ground state $Q$ (perhaps, up to translations). We begin with the precise concept of stability and instability used in this paper. For $\alpha>0$, the neighborhood (or ``tube") of radius $\alpha$ around $Q$ (modulo translations) is defined by
\begin{equation*}
U_{\alpha}=\left\{u\in H^1(\mathbb{R}^2): \inf_{\vec{y}\in \mathbb{R}^2}\|u(\cdot)-Q(\cdot+\vec{y})\|_{H^1}\leq \alpha \right\}.
\end{equation*}
\begin{definition}[Stability of $Q$]
We say that $Q$ is stable if for all $\alpha>0$, there exists $\delta>0$ such that if $u_0\in U_{\delta}$, then the corresponding solution $u(t)$ is defined for all $t\geq 0$ and $u(t)\in U_{\alpha}$ for all $t\geq 0$.
\end{definition}

\begin{definition}[Instability of $Q$]
We say that $Q$ is unstable if $Q$ is not stable, in other words, there exists $\alpha>0$ such that for all $\delta>0$ the following holds:
if $u_0\in U_{\delta}$, then there exists $t_0=t_0(u_0)$ such that $u(t_0)\notin U_{\alpha}$.
\end{definition}

The main goal in this paper is to provide a new proof of the instability result, originally obtained by de Bouard \cite{DeB} in her study of dispersive solitary waves in higher dimensions (her result holds in dimensions 2 and 3). She showed \cite{DeB} that the travelling waves of the form \eqref{Eq:TW} are stable (in the two dimensional case) for $p<3$ and unstable for $p>3$. She followed the ideas of Bona-Souganidis-Strauss \cite{BSS} for the instability, and Grillakis-Shatah-Strauss \cite{GSS} for the stability arguments. Here, we prove the instability of the traveling wave solution of the form \eqref{Eq:TW}, in a spirit of Combet \cite{Co10}, where he revisited the instability phenomenon for the gKdV equation in the supercritical case, $p>5$. We show an open set of initial data, in particular, an explicit example of a sequence of initial data, which would contradict the stability of $Q$. To this end consider, for $n\geq 1$
$$
u_{0,n}(\vec{x})=\lambda_nQ(\lambda_n \vec{x}), \quad \textrm{where} \quad \lambda_n=1+\frac1n \quad \textrm{and} \quad  \vec{x}=(x_1,x_2).
$$
Our main motivation is to show the new methods available (monotonicity and truncation) to obtain the following result:
\begin{theorem}[$H^1$-instability of $Q$ for the supercritical gZK]\label{Theo-Inst}
Let $u_n$ be the solution with initial data $u_{0,n}$, then there exists $\alpha>0$ such that for every $n\in \mathbb{N}$,
there exists $T_n=T_n(u_{0,n})$ such that $u_n(T_n)\notin U_{\alpha}$, or explicitly,
$$
\inf_{\vec{y}\in \R^2}\|u_n(T_{n}, \cdot)-Q(\cdot-\vec{y})\|\geq \alpha.
$$
\end{theorem}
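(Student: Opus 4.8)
The engine of the proof is a comparison of the conserved mass $M[u]=\int_{\cR^2}u^2$ and of the conserved energy $E[u]=\tfrac12\int_{\cR^2}|\nabla u|^2-\tfrac1{p+1}\int_{\cR^2}u^{p+1}$ of the initial datum $u_{0,n}$ with those of $Q$. The map $\vec x\mapsto\lambda_nQ(\lambda_n\vec x)$ is precisely the $L^2$--invariant scaling in two dimensions, so a change of variables gives $M[u_{0,n}]=M[Q]$ \emph{exactly} for every $n$. For the energy one computes $E[u_{0,n}]=f(\lambda_n)$, where $f(\lambda)=\tfrac{\lambda^2}{2}\int|\nabla Q|^2-\tfrac{\lambda^{p-1}}{p+1}\int Q^{p+1}$. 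Invoking the Pohozaev identities attached to $-\Delta Q+Q-Q^p=0$, namely $\int|\nabla Q|^2=\tfrac{p-1}{p+1}\int Q^{p+1}$ and $\int Q^2=\tfrac{2}{p+1}\int Q^{p+1}$, one finds $f'(1)=0$ and $f''(1)=\tfrac{(p-1)(3-p)}{p+1}\int Q^{p+1}$, which is strictly negative precisely when $p>3$. Hence $\lambda=1$ is a strict local maximum of $f$, and since $\lambda_n=1+\tfrac1n>1$,
\[
\beta_n:=E[Q]-E[u_{0,n}]>0 .
\]
Thus $u_{0,n}$ carries exactly the mass of $Q$ but \emph{strictly less energy} — the signature of the supercritical (saddle) regime — while $\beta_n\to0$, consistently with $u_{0,n}\to Q$ in $H^1$.

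I would argue by contradiction. Fix $\alpha>0$ small, depending only on $Q$ and $p$ (never on $n$), and suppose that for some $n$ the solution $u_n(t)$ stays in $U_\alpha$ for all $t\ge0$. Because the whole trajectory lives in a thin tube, standard modulation theory (the implicit function theorem applied to the orthogonality conditions, using nondegeneracy of the soliton manifold) yields $C^1$ parameters $c(t)>0$ and $\vec\rho(t)\in\cR^2$ with $|c(t)-1|+\|\eta(t)\|_{H^1}\lesssim\alpha$, where
\[
\eta(t,\vec x)=u_n(t,\vec x+\vec\rho(t))-Q_{c(t)}(\vec x),
\]
and $\eta(t)$ is orthogonal to the tangent directions of the manifold, i.e. to the translations $\partial_{x_1}Q_{c}$, $\partial_{x_2}Q_{c}$ and the scaling direction $\Lambda Q_{c}:=\partial_cQ_{c}$. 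Differentiating the orthogonality relations in time and substituting the equation \eqref{gZK} produces the modulation estimates $|c'(t)|+|\vec\rho{\,}'(t)-(c(t),0)|\lesssim\|\eta(t)\|_{L^2(\mathrm{loc})}$.

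The heart of the matter — the new ingredient replacing the Lyapunov functional of \cite{DeB,BSS,GSS} — is a \emph{monotonicity formula obtained by truncation}. Since the soliton travels to the right in $x_1$ at speed $\approx c(t)$ while dispersive radiation is shed backward, I would introduce a cutoff $\psi$ in the $x_1$ variable and a point $\tilde\rho(t)$ advancing slightly more slowly than the soliton, and study the localized mass $\mathcal I(t)=\int_{\cR^2}u_n^2(t,\vec x)\,\psi\big(x_1-\tilde\rho(t)\big)\,d\vec x$. Using the divergence structure of gZK together with the exponential localization \eqref{prop-Q} of $Q_{c}$, one shows that $\mathcal I$ is almost monotone, the error being exponentially small in the truncation scale. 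Combined with the modulation estimates and the conservation of $M$ and $E$, this controls the remainder over infinite time and furnishes a sequence $t_k\to\infty$ along which $\|\eta(t_k)\|_{H^1}\to0$ and $c(t_k)\to c_\infty$ with $|c_\infty-1|\lesssim\alpha$; the no--escape-of-mass provided by $\mathcal I$ upgrades the resulting weak limit to strong $H^1$ convergence. In particular $u_n(t_k,\cdot+\vec\rho(t_k))\to Q_{c_\infty}$ in $H^1$.

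The contradiction is then a rigidity statement for the conserved quantities. By conservation of mass and the $H^1$--continuity and translation invariance of $M$, we get $M[Q_{c_\infty}]=\lim_kM[u_n(t_k)]=M[u_{0,n}]=M[Q]$; since $M[Q_c]=c^{(3-p)/(p-1)}M[Q]$ is strictly monotone in $c$ for $p>3$, this forces $c_\infty=1$. By conservation of energy, $E[Q_{c_\infty}]=\lim_kE[u_n(t_k)]=E[u_{0,n}]$, that is $E[Q]=E[u_{0,n}]$, contradicting $\beta_n>0$. Therefore no such $n$ can exist, so for every $n$ there is a time $T_n$ with $u_n(T_n)\notin U_\alpha$, which is the assertion. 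The two delicate points, and where the real work lies, are: (i) proving the monotonicity formula in the genuinely two--dimensional, anisotropic ZK geometry, where the $x_1$--flux must be shown to dominate the transverse contributions after truncation; and (ii) the coercivity of the linearized operator $\mathcal L_c=-\Delta+c-pQ_c^{p-1}$ on the orthogonal complement of the symmetry directions, which in the supercritical case holds only modulo the single negative direction and must be combined with the conservation laws so that the energy deficit $\beta_n$ is precisely what drives the exit from the tube. I expect (i) to be the main obstacle.
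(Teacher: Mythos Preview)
Your approach is genuinely different from the paper's, and the central step you rely on has a gap I do not see how to close.

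The paper does not attempt to prove that a solution trapped in the tube converges (even along a subsequence) to a soliton. Instead it introduces a scalar functional
\[
J(t)=\int_{\cR^2}\ep(t,\vec x)\,F(\vec x)\,d\vec x,\qquad F(x_1,x_2)=\int_{-\infty}^{x_1}\big(\Lambda Q(z,x_2)+\beta\,\chi_0(z,x_2)\big)\,dz,
\]
where $\chi_0$ is the positive eigenfunction of $L=-\Delta+1-pQ^{p-1}$ for its unique negative eigenvalue $-\lambda_0$, and $\beta=-\int Q\,\Lambda Q\big/\int Q\,\chi_0>0$ when $p>3$. With this choice one computes $J'(t)=\beta\lambda_0\int\ep\,\chi_0+O(\|\ep\|_2\|\ep\|_{H^1})$; the coercivity of $L$ \emph{modulo} $\chi_0$ together with the strict energy deficit $E[u_0]<E[Q]$ then forces $(\int\ep\,\chi_0)^2\gtrsim\|\ep\|_2^2+\delta_0$, hence $|J'(t)|\geq a_0>0$ for all $t$. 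Monotonicity enters only to prove that $|J(t)|\le M_0$ uniformly in $t$ (this is nontrivial because $F\notin L^2$ as $x_1\to+\infty$; the almost-monotonicity of localized mass supplies the needed $L^2$ decay of $\ep$ on the right). The contradiction is immediate: $J$ bounded, $J'$ of fixed sign and bounded away from zero.

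Your route instead asks monotonicity to deliver a subsequence $t_k\to\infty$ with $\|\eta(t_k)\|_{H^1}\to0$, and then closes with rigidity on $M$ and $E$. The rigidity step is fine, but the extraction step is precisely where the supercritical negative direction blocks you. In the Martel--Merle asymptotic-stability scheme one obtains $\int_0^\infty\|\eta(t)\|_{L^2_{\mathrm{loc}}}^2\,dt<\infty$ from a local virial identity whose quadratic form is coercive on the orthogonal complement of the tangent space; in the supercritical case that quadratic form has a negative direction along $\chi_0$, and your three orthogonalities (two translations plus $\Lambda Q_c$) do not remove it. Without that integrability there is no mechanism forcing $\|\eta(t_k)\|_{L^2_{\mathrm{loc}}}\to0$; a priori the solution could oscillate in the tube indefinitely. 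You flag this in your point~(ii) but give no way around it---and the paper's answer is exactly to stop fighting the negative direction and instead \emph{use} it, via the $\beta\chi_0$ term in $F$, to make $|J'|$ strictly positive.

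A smaller discrepancy: the paper modulates only by translation (two parameters, with $\ep\perp Q_{x_1},Q_{x_2}$) and never introduces a scaling parameter $c(t)$; the role of scaling is carried entirely by the fixed function $\Lambda Q$ inside $F$.
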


The paper is organized as follows. In Section \ref{S-2} we provide the background information on the well-posedness of the generalized ZK equation in 2 dimensions. In Section \ref{S-3} we discuss the properties of the linearized operator $L$ around the ground state $Q$. Section \ref{S-4} contains the canonical decomposition of the solution around $Q$, the modulation theory and control of parameters coming from such a decomposition. In Section \ref{S-5} we discuss the virial-type functional and the concept of monotonicity. The next Section \ref{S-6} contains a new proof of the instability via truncation and monotonicity.
\medskip

{\bf Acknowledgements.} Most of this work was done when the first author was visiting GWU in 2016-17 under the support of the Brazilian National Council for Scientific and Technological Development (CNPq/Brazil), for which all authors are very grateful as it boosted the energy into the research project.
S.R. would like to thank IHES and the organizers for the excellent working conditions during the trimester program ``Nonlinear dispersive waves - 2017" in May-July 2016.
L.G.F. was partially supported by CNPq and FAPEMIG/Brazil.
J.H. was partially supported by the NSF grant DMS-1500106.
S.R. was partially supported by the NSF CAREER grant DMS-1151618.

\section{Background on the generalized ZK equation} \label{S-2}
In this section we review the known results on the local and global well-posedness of the generalized ZK equation. To follow the notation
in the literature, in this section we denote the power of nonlinearity as $u^{k+1}$ (instead of $u^p$) and consider the Cauchy problem for the gZK equation as follows:
\begin{equation}\label{gzk2}
\begin{cases}
u_t+\partial_{x_1} \Delta u+ \partial_{x_1}(u^{k+1})  =  0,  \quad (x_1,x_2) \in \mathbb{R}^2, ~ t>0, \\
u(0, x_1,x_2)=u_0(x_1,x_2) \in {H}^s(\R^2).
\end{cases}
\end{equation}
The first paper to address the local well-posedness of this Cauchy problem for the $k=1$ case was by Faminskii \cite{Fa}, where he considered $s=1$ (strictly speaking, he obtained the local well-posedness in $H^m$, for any integer $m \geq 1$.)
The current results on the local well-posedness are gathered in the following statement.
\begin{theorem} 
The local well-posedness in \eqref{gzk2} holds in the following cases:
\begin{itemize}
\item
$k=1$: for $s>\frac12$, see Gr\"unrock-Herr \cite{GH},
\item
$k=2$: for $s>\frac14$, see Ribaud-Vento \cite{RV},
\item
$k=3$: for $s>\frac5{12}$, see Ribaud-Vento \cite{RV},
\item
$k=4, 5, 6, 7$: for $s>1-\frac2{k}$, see Ribaud-Vento \cite{RV},
\item
$k=8$, $s>\frac34$, see Linares-Pastor \cite{LP1} 
\item
$k>8$, $s>s_k=1-2/k$, see Farah-Linares-Pastor \cite{FLP}.
\end{itemize}
\end{theorem}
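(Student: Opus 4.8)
Since this statement gathers local well-posedness results established in the separately cited works, I will not reproduce each argument but rather describe the common contraction-mapping scheme that underlies them and indicate where the case-dependent thresholds for $s$ enter. The plan is to rewrite \eqref{gzk2} in its Duhamel (integral) form
\begin{equation*}
u(t) = U(t)u_0 - \int_0^t U(t-t')\,\partial_{x_1}\bigl(u^{k+1}(t')\bigr)\,dt',
\end{equation*}
where $U(t)=e^{-t\partial_{x_1}\Delta}$ is the unitary group with Fourier multiplier $e^{it(\xi_1^3+\xi_1\xi_2^2)}$, and to produce a solution by the fixed point theorem in a suitable space $X_T^s$ on a short interval $[0,T]$. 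The index $s$ is dictated by the only genuinely nonlinear step, the multilinear estimate for the Duhamel term.

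First I would establish the linear dispersive estimates for $U(t)$ that quantify its smoothing and so compensate for the derivative loss in the nonlinearity. Writing $x=x_1$, $y=x_2$, the three standard ingredients are: the anisotropic local smoothing estimate, which gains one full $x_1$-derivative, $\|\partial_{x_1}U(t)u_0\|_{\kse}\lesssim\|u_0\|_{L^2}$; a Strichartz-type estimate controlling $\|U(t)u_0\|_{\str}$ by a Sobolev norm of $u_0$; and a maximal-function estimate $\|U(t)u_0\|_{\mne}\lesssim\|u_0\|_{H^s}$ valid for $s$ above a threshold. These follow from stationary-phase analysis of the oscillatory kernel of $U(t)$ together with the $TT^*$ method, exactly as in the Kenig--Ponce--Vega theory for KdV, here adapted to the two-dimensional anisotropic phase $\xi_1^3+\xi_1\xi_2^2$.

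With these in hand I would take $X_T^s$ to be the intersection of the Banach spaces measured by the norms above, weighted by $J^s=(1-\Delta)^{s/2}$, and show that the Duhamel map is a contraction on a ball of $X_T^s$ for $T$ small. The linear term is immediate from the estimates; the crux, and the step I expect to be the main obstacle, is the nonlinear term, where the derivative $\partial_{x_1}$ in front of $u^{k+1}$ must be absorbed by the dual smoothing estimate $\bigl\|\int_0^t U(t-t')\partial_{x_1}f\,dt'\bigr\|_{L^\infty_T L^2_{x_1x_2}}\lesssim\|f\|_{L^1_{x_1}L^2_{x_2T}}$, after which the product $u^{k+1}$ is distributed by H\"older's inequality among the Strichartz and maximal-function norms. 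Concretely one must prove the sharp multilinear estimate
\begin{equation*}
\Bigl\|\int_0^t U(t-t')\,\partial_{x_1}\bigl(u_1\cdots u_{k+1}\bigr)\,dt'\Bigr\|_{X_T^s} \lesssim \prod_{j=1}^{k+1}\|u_j\|_{X_T^s},
\end{equation*}
and it is precisely here that each value of $k$ forces a different threshold for $s$: for small $k$ one can close the H\"older/Sobolev embedding estimates at low regularity, while for large $k$ the number of factors pushes $s$ up toward the scaling exponent $s_k=1-\tfrac{2}{k}$. For $k=1$ the smoothing-based spaces are not sharp enough, and the optimal result is instead obtained in the Fourier restriction norm (Bourgain $X^{s,b}$) framework adapted to the symbol, which I would use as the alternative setting in that case.

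Finally, once the fixed point is found, uniqueness, persistence of regularity, and Lipschitz continuous dependence on the data follow by applying the same multilinear estimate to differences $u-v$, yielding local well-posedness in each of the stated ranges of $s$.
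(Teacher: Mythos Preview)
The paper does not prove this theorem at all: it is a literature survey statement, with each item attributed to the cited reference (Gr\"unrock--Herr, Ribaud--Vento, Linares--Pastor, Farah--Linares--Pastor) and no argument given beyond those citations. So there is no ``paper's own proof'' to compare against.

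Your sketch is a reasonable high-level summary of the Kenig--Ponce--Vega style contraction argument that indeed underlies most of the cited results, and you correctly flag that the $k=1$ case uses Fourier restriction norm spaces rather than smoothing/maximal-function spaces. That said, as written it is an outline rather than a proof: the precise thresholds $s>\tfrac12$, $s>\tfrac14$, $s>\tfrac5{12}$, $s>1-\tfrac2k$ are not derived from your description, and obtaining each of them requires nontrivial case-specific work (choice of function spaces, frequency-localized bilinear estimates, interpolation) that you would need to carry out or cite explicitly. For the purposes of this paper, simply citing the references as the authors do is the appropriate ``proof''.
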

Note that in the last three cases (i.e., for $k \geq 4$), the bound on $s > s_k$ is optimal from the scaling conjecture.
For previous results on the local well-posedness for $2 \leq k \leq 8$ for $s>3/4$ see \cite{LP} and \cite{LP1}.

Following the approach of Holmer-Roudenko for the $L^2$-supercritical nonlinear Schr\"odinger (NLS) equation, see \cite{HR} and \cite{DHR}, the first author together with F. Linares and A. Pastor obtained the global well-posedness result for the nonlinearities $k \geq 3$ and under a certain mass-energy threshold, see \cite{FLP}.
\begin{theorem}[\cite{FLP}]
 Let $k\geq3$ and $s_k=1-2/k$.  Assume $u_0\in H^1(\R^2)$ and suppose that
\begin{equation}\label{GR1}
E(u_0)^{s_k} M(u_0)^{1-s_k} < E(Q)^{s_k} M(Q)^{1-s_k} , \,\,\,
E(u_0) \geq 0.
\end{equation}
If
\begin{equation}\label{GR2}
\|\nabla u_0\|_{L^2}^{s_k}\|u_0\|_{L^2}^{1-s_k} < \|\nabla
Q\|_{L^2}^{s_k}\|Q\|_{L^2}^{1-s_k},
\end{equation}
then for any $t$ from the maximal interval of existence 
\begin{equation*}
\|\nabla u(t)\|_{L^2}^{s_k}\|u_0\|_{L^2}^{1-s_k}=\|\nabla
u(t)\|_{L^2}^{s_k}\|u(t)\|_{L^2}^{1-s_k} <\|\nabla
Q\|_{L^2}^{s_k}\|Q\|_{L^2}^{1-s_k},
\end{equation*}
where $Q$ is the unique positive radial solution of
$$
\Delta Q-Q+Q^{k+1}=0.
$$
In particular, this implies that $H^1$ solutions, satisfying \eqref{GR1}-\eqref{GR2} exist globally in time.
\end{theorem}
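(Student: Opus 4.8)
The plan is to follow the variational strategy of Holmer--Roudenko for the $L^2$-supercritical NLS (as cited, \cite{HR,DHR}), adapted to the gZK scaling, combining the sharp Gagliardo--Nirenberg inequality with the two conservation laws and a continuity (``trapping'') argument; no concentration compactness is needed. First I would record the sharp Gagliardo--Nirenberg inequality on $\mathbb{R}^2$ adapted to the nonlinearity $u^{k+1}$: for every $v\in H^1(\mathbb{R}^2)$,
\begin{equation*}
\int_{\mathbb{R}^2}|v|^{k+2}\,dx \le C_{GN}\,\|\nabla v\|_{L^2}^{k}\,\|v\|_{L^2}^{2},
\end{equation*}
where the exponents $k$ and $2$ are forced by scaling and the optimal constant $C_{GN}$ is attained at $Q$. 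Testing the elliptic equation $\Delta Q-Q+Q^{k+1}=0$ against $Q$ and against $x\cdot\nabla Q$ (the Pohozaev identities in dimension two) gives
\begin{equation*}
\|\nabla Q\|_{L^2}^2=\tfrac{k}{2}\|Q\|_{L^2}^2,\qquad \int_{\mathbb{R}^2}Q^{k+2}\,dx=\tfrac{k+2}{2}\|Q\|_{L^2}^2,
\end{equation*}
whence $C_{GN}$ is expressible purely through $\|\nabla Q\|_{L^2}$ and $\|Q\|_{L^2}$, and $E(Q)=\tfrac{k-2}{4}\|Q\|_{L^2}^2>0$. These identities are what tie the abstract threshold to the ground-state quantities appearing in \eqref{GR1}--\eqref{GR2}.

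Next, using mass and energy conservation together with the Gagliardo--Nirenberg bound, I would show that, writing $M=M(u_0)=\|u(t)\|_{L^2}^2$ and $g(\lambda)=\tfrac12\lambda^2-\tfrac{C_{GN}}{k+2}M\lambda^{k}$, the lower bound
\begin{equation*}
E(u_0)=E(u(t))\ge \tfrac12\|\nabla u(t)\|_{L^2}^2-\tfrac{C_{GN}}{k+2}M\|\nabla u(t)\|_{L^2}^{k}=g\big(\|\nabla u(t)\|_{L^2}\big)
\end{equation*}
holds for every $t$ in the maximal interval. Since $k\ge 3$, the function $g$ vanishes at $0$, increases to a single maximum at $\lambda_*=\lambda_*(M)$, then decreases to $-\infty$; a direct computation using the Pohozaev identities identifies $g(\lambda_*)$ and $\lambda_*$ in scale-invariant form. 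Multiplying through by the appropriate powers of $M$ (recall $s_k=1-2/k$) turns the three statements ``$E(u_0)<g(\lambda_*)$'', ``$\|\nabla u_0\|_{L^2}<\lambda_*$'' and ``$E(u_0)\ge 0$'' into exactly the hypotheses \eqref{GR1}--\eqref{GR2}, so that the problem reduces to a one-variable analysis of $g$.

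Finally I would run the trapping argument. By local well-posedness at the $H^1$ level (available for $k\ge 3$ by the results quoted in Section~\ref{S-2}), the map $t\mapsto\|\nabla u(t)\|_{L^2}$ is continuous on the maximal interval. Hypothesis \eqref{GR2} places $\|\nabla u_0\|_{L^2}$ on the increasing branch, strictly below $\lambda_*$, while \eqref{GR1} gives $E(u_0)<g(\lambda_*)$. If $\|\nabla u(t)\|_{L^2}$ were ever to reach $\lambda_*$, then the displayed inequality would force $E(u_0)\ge g(\lambda_*)$, contradicting \eqref{GR1}; by continuity and the intermediate value theorem $\|\nabla u(t)\|_{L^2}$ cannot cross $\lambda_*$, hence stays strictly below it for all $t$. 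This is precisely the claimed inequality on $\|\nabla u(t)\|_{L^2}^{s_k}\|u(t)\|_{L^2}^{1-s_k}$, and since it furnishes a uniform-in-time bound on $\|\nabla u(t)\|_{L^2}$ (with $\|u(t)\|_{L^2}$ conserved), the $H^1$ norm cannot blow up, so the blow-up alternative from the local theory yields global existence.

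I expect the main obstacle to be the bookkeeping that matches $g(\lambda_*)$ and $\lambda_*$, after the scale-invariant normalization, exactly to the ground-state combinations $E(Q)^{s_k}M(Q)^{1-s_k}$ and $\|\nabla Q\|_{L^2}^{s_k}\|Q\|_{L^2}^{1-s_k}$; this is where the Pohozaev identities and the sharp constant $C_{GN}$ must be used with care, and where the role of the sign condition $E(u_0)\ge 0$ in \eqref{GR1} (keeping the analysis on the correct, increasing branch of $g$) becomes essential. The continuity and no-crossing steps are then elementary.
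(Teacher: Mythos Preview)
The paper does not give its own proof of this theorem: it is quoted as background from \cite{FLP} (which in turn adapts the Holmer--Roudenko strategy of \cite{HR,DHR}), so there is nothing in the present paper to compare against. Your outline is exactly that strategy---sharp Gagliardo--Nirenberg with Pohozaev identities to pin down the constants, followed by the one-variable trapping/continuity argument---and it is correct; the exponents, the Pohozaev relations $\|\nabla Q\|_{L^2}^2=\tfrac{k}{2}\|Q\|_{L^2}^2$, $\int Q^{k+2}=\tfrac{k+2}{2}\|Q\|_{L^2}^2$, and the identification of $g(\lambda_*)$, $\lambda_*$ with the scale-invariant ground-state quantities all check out.
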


\begin{remark}
In the limit case $k=2$ (or $p=3$, the  modified ZK equation), conditions
\eqref{GR1} and \eqref{GR2} reduce to one condition, which is
$$
\|u_0\|_{L^2}<\|Q\|_{L^2}.
$$
Such a condition was already used in \cite{LP} and \cite{LP1} to
show the existence of global solutions, respectively, in $H^1(\R^2)$
and $H^s(\R^2)$, $s>53/63$.
\end{remark}
We conclude this section with a note that for the purpose of this paper, it is sufficient to have the well-posedness theory in $H^1(\R^2)$.

\section{The Linearized Operator $L$}\label{S-3}

The operator $L$, which is obtained by linearizing around the ground state $Q$, is defined by
\begin{equation}
\label{L-def}
L  := - \Delta  + 1 - p \, Q^{p-1}.
\end{equation}
We first state the properties of this operator $L$ (see Kwong \cite{K89} for all dimensions, Weinstein \cite{W85} for dimension 1 and 3, also Maris \cite{M02} and \cite{CGNT}).
\begin{theorem}[Properties of $L$]\label{L-prop}
The following holds for an operator $L$ defined in \eqref{L-def}
\begin{itemize}
\item
$L$ is a self-adjoint operator and
$\sigma_{ess}(L) = [ \la_{ess}, +\infty ) \quad \mbox{for~~ some~~} \la_{ess} > 0$

\item
$\ker L = \mbox{span} \{Q_{x_1}, Q_{x_2} \}$
	
\item
$L$ has a unique single negative eigenvalue $-\la_0$ (with $\la_0 > 0$) associated to a
positive radially symmetric eigenfunction $\chi_0$.
Moreover, there exists $\delta > 0$ such that
\begin{equation}\label{chi_0}
|\chi_0(x)| \lesssim e^{- \delta |x|} \quad \mbox{for ~~all} ~~ x \in \cR^2.
\end{equation}
\end{itemize}
\end{theorem}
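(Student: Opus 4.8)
The plan is to treat $L$ as a perturbation of the free operator $L_\infty := -\Delta + 1$ and to extract the finer spectral information from an angular (spherical-harmonic) decomposition that exploits the radiality of $Q$.

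First I would establish self-adjointness and the essential spectrum. Since $Q \in C^\infty(\R^2)$ decays exponentially by \eqref{prop-Q}, the potential $V := -p\,Q^{p-1}$ is a bounded, real, exponentially decaying function. Thus $L = L_\infty + V$ is a bounded symmetric perturbation of the self-adjoint operator $L_\infty$ (with domain $H^2(\R^2)$), and the Kato--Rellich theorem gives self-adjointness of $L$ on the same domain. Because $V$ vanishes at infinity, multiplication by $V$ is relatively compact with respect to $L_\infty$ (the operator $V(L_\infty+i)^{-1}$ is compact on $L^2(\R^2)$), so Weyl's theorem on the stability of the essential spectrum yields $\sigma_{ess}(L) = \sigma_{ess}(L_\infty) = [1,+\infty)$; in particular $\la_{ess} = 1 > 0$.

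Next, for the kernel, the inclusion $\operatorname{span}\{Q_{x_1},Q_{x_2}\} \subseteq \ker L$ is immediate: differentiating the ground state equation $-\Delta Q + Q - Q^p = 0$ in $x_1$ and in $x_2$ gives $L Q_{x_1} = 0$ and $L Q_{x_2} = 0$. The reverse inclusion is the nondegeneracy of the ground state, and it is the main obstacle. Here I would use that $L$ commutes with rotations, so on $L^2(\R^2) = \bigoplus_{\ell \geq 0} \mathcal{H}_\ell$ (angular modes $e^{i\ell\theta}$) it decomposes into radial Schr\"odinger operators $L_\ell = -\partial_r^2 - \tfrac1r\partial_r + \tfrac{\ell^2}{r^2} + 1 - pQ^{p-1}$. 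The functions $Q_{x_1}, Q_{x_2}$ live in the $\ell=1$ sector with radial profile $Q'(r) < 0$. The crux is to show: (i) $\ker L_0 = \{0\}$; (ii) $\ker L_1 = \operatorname{span}\{Q'\}$; and (iii) $L_\ell > 0$ for $\ell \geq 2$. Since $Q'$ is sign-definite it is the ground state of $L_1$, which forces $L_1 \geq 0$ and simplicity of its zero eigenvalue; strict positivity for $\ell \geq 2$ follows from $L_\ell = L_1 + (\ell^2-1)/r^2$. The trivial kernel of $L_0$ rests on the uniqueness and nondegeneracy analysis of Kwong \cite{K89}, and this delicate radial ODE argument is the heart of the proof.

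Finally, for the negative eigenvalue, testing against $Q$ gives $\langle LQ,Q\rangle = (1-p)\int_{\R^2} Q^{p+1} < 0$, using the equation to replace $\int |\nabla Q|^2 + Q^2 = \int Q^{p+1}$, so $L$ has at least one negative eigenvalue. By the Perron--Frobenius property (the semigroup $e^{-tL}$ is positivity improving) the lowest eigenfunction is simple and may be chosen positive, hence radial, so it lies in $\mathcal{H}_0$. The same radial analysis shows $L_0$ carries exactly one negative eigenvalue, while (ii)--(iii) show the sectors $\ell \geq 1$ are nonnegative; therefore the negative eigenvalue $-\la_0$ is simple with radial positive eigenfunction $\chi_0$. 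The exponential decay \eqref{chi_0} then follows from the eigenvalue equation $-\Delta\chi_0 + (1+\la_0)\chi_0 = pQ^{p-1}\chi_0$: since $-\la_0 < 0$ lies strictly below the essential spectrum and the right-hand side decays exponentially, a standard comparison (Agmon) argument gives $|\chi_0(x)| \lesssim e^{-\delta|x|}$ for any $\delta < \sqrt{1+\la_0}$.
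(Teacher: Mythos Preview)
The paper does not actually prove this theorem: immediately before the statement it simply refers the reader to Kwong \cite{K89}, Weinstein \cite{W85}, Maris \cite{M02}, and \cite{CGNT}, and no argument is given in the text. Your outline is precisely the standard route taken in that literature---Kato--Rellich plus Weyl for self-adjointness and $\sigma_{ess}(L)=[1,\infty)$, the angular decomposition into the radial operators $L_\ell$ with Kwong's nondegeneracy handling $\ell=0$, the $Q'<0$ ground-state argument for $\ell=1$, monotonicity in $\ell$ for $\ell\geq 2$, the variational test $\langle LQ,Q\rangle=(1-p)\int Q^{p+1}<0$ together with Perron--Frobenius for the negative eigenvalue, and Agmon decay for $\chi_0$---so there is nothing in the paper to contrast it with.

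One minor remark on your sketch: the assertion that ``the same radial analysis shows $L_0$ carries exactly one negative eigenvalue'' is not literally the same as items (i)--(iii). Points (i)--(iii) only tell you $\ker L_0=\{0\}$ and that the sectors $\ell\geq 1$ are nonnegative; ruling out a \emph{second} negative eigenvalue of $L_0$ requires an extra ingredient, typically a Sturm-oscillation argument for the Bessel-type ODE (the second radial eigenfunction must change sign exactly once, and one then locates zero relative to it). This is again contained in Kwong \cite{K89}, so your citation covers it, but it is worth flagging as a separate step rather than folding it into the kernel analysis.
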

We also define the generator $\Lambda$ of the scaling symmetry
\begin{equation*}
\Lambda f = \frac1{p-1} f + \frac12 \, \vec{x} \cdot \nabla f, ~~~(x_1,x_2) \in \cR^2.
\end{equation*}
The following identities are useful to have
\begin{lemma}\label{L-prop2}
The following identities hold
\begin{enumerate}
\item
$L (\Lambda Q) = - Q$ 
\item
$\int Q \, \Lambda Q = \frac{3-p}{2(p-1)}\int Q^2<0$ for $p>3$, and $\int Q \, \Lambda Q = 0$ if $p=3$.
\end{enumerate}
\end{lemma}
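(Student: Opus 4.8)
The plan is to derive both identities from the scaling structure of the elliptic equation $-\Delta Q + Q - Q^p = 0$ together with a single integration by parts, using the exponential decay \eqref{prop-Q} to discard all boundary contributions.

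For the first identity I would observe that $\Lambda Q$ is exactly the infinitesimal generator of the dilation family $Q_c(\vec{x}) = c^{1/(p-1)} Q(c^{1/2}\vec{x})$, namely $\Lambda Q = \frac{d}{dc} Q_c \big|_{c=1}$, which one confirms by differentiating directly to get $\frac{1}{p-1}Q + \frac12 \vec{x}\cdot\nabla Q$. Substituting $Q_c$ into the governing equation shows that each $Q_c$ solves the rescaled problem $-\Delta Q_c + c\,Q_c - Q_c^p = 0$. Differentiating this relation in $c$ and setting $c=1$ gives
$$
-\Delta(\Lambda Q) + Q + \Lambda Q - pQ^{p-1}\Lambda Q = 0,
$$
which is precisely $L(\Lambda Q) = -Q$. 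Alternatively, one can verify this by a direct computation: since $-\Delta Q + Q = Q^p$ yields $LQ = (1-p)Q^p$, and the commutator identity $\Delta(\vec{x}\cdot\nabla Q) = 2\Delta Q + \vec{x}\cdot\nabla(\Delta Q)$ yields $L(\vec{x}\cdot\nabla Q) = -2Q + 2Q^p$, the appropriate linear combination reproduces $L(\Lambda Q) = -Q$.

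For the second identity I would expand
$$
\int Q\,\Lambda Q = \frac{1}{p-1}\int Q^2 + \frac12 \int Q\,(\vec{x}\cdot\nabla Q),
$$
and rewrite the last integrand as $\frac12\,\vec{x}\cdot\nabla(Q^2)$. Integration by parts in $\R^2$, using $\nabla\cdot\vec{x} = 2$ and vanishing boundary terms guaranteed by \eqref{prop-Q}, gives $\int \vec{x}\cdot\nabla(Q^2) = -2\int Q^2$, hence $\int Q\,(\vec{x}\cdot\nabla Q) = -\int Q^2$. Collecting the two terms produces
$$
\int Q\,\Lambda Q = \left(\frac{1}{p-1} - \frac12\right)\int Q^2 = \frac{3-p}{2(p-1)}\int Q^2,
$$
which is negative for $p > 3$ and zero for $p = 3$.

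The computations are elementary; the only points requiring care are keeping track of the scaling exponents, so that the $c$-derivative lands on precisely the combination defining $\Lambda$, and confirming that the boundary terms in the integration by parts vanish, which is immediate from the exponential decay of $Q$ and its derivatives in \eqref{prop-Q}. I do not expect any genuine obstacle beyond this bookkeeping.
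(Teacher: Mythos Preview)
Your proof is correct and follows essentially the same approach as the paper: the paper states that the first identity follows directly from the definitions of $L$, $\Lambda$, and the ground state equation (without spelling out the scaling differentiation you wrote out), and for the second identity it performs the same integration by parts to obtain $\left(\frac{1}{p-1}-\frac12\right)\int Q^2$. Your write-up simply supplies more detail than the paper does.
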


\begin{proof}
The first two identities follow directly from the definition of $L$, $\Lambda$ and the equation $-\Delta Q+Q  - Q^p = 0$. The third identity is also easy to derive, for example
\begin{equation*}
\int Q\Lambda Q =\frac{1}{p-1}\int Q^2-\frac12\left(2\int Q^2 + \int x \cdot \nabla Q \right)
= \left(\frac{1}{p-1}-\frac12\right)\int Q^2,
\end{equation*}
from which (iii) follows.

\end{proof}


%
%
%

In general, the operator $L$ is not positive-definite, however, if we exclude appropriately the zero eigenvalue and negative eigenvalue directions, then one can expect some positivity properties, which we exhibit in the following two lemmas.
\begin{lemma}[Orthogonality Conditions I]\label{Lemma-ort1}
Let $\chi_0$ be the positive radially symmetric eigenfunction associated to the unique single negative eigenvalue $-\la_0$ (with $\la_0 > 0$). Then, there exists $\sigma_0$ such that for any $f \in H^1(\R^2)$ satisfying
\begin{equation*}
(f, \chi_0) = (f, Q_{x_j}) =0, \quad j=1,2,
\end{equation*}
one has
$$
(Lf,f) \geq \sigma_0 \, \|f\|_2^2.
$$
\end{lemma}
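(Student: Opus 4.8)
The plan is to recast the claimed inequality as strict positivity of the constrained Rayleigh quotient
\[
\sigma_0 := \inf\Big\{ (Lf,f) : f\in H^1(\R^2),\ \|f\|_2 = 1,\ (f,\chi_0) = (f,Q_{x_1}) = (f,Q_{x_2}) = 0 \Big\};
\]
once $\sigma_0>0$ is established, homogeneity of $(L\cdot,\cdot)$ and of $\|\cdot\|_2^2$ yields $(Lf,f)\ge \sigma_0\|f\|_2^2$ on the whole constraint set. Conceptually the result is immediate from the spectral theorem: by Theorem \ref{L-prop} the part of $\sigma(L)$ in $(-\infty,0]$ consists precisely of the simple eigenvalue $-\la_0$ (eigenvector $\chi_0$) and the eigenvalue $0$ (eigenspace $\ker L=\mathrm{span}\{Q_{x_1},Q_{x_2}\}$), and since $\sigma_{ess}(L)=[\la_{ess},\infty)$ with $\la_{ess}>0$, the eigenvalue $0$ is isolated, so $\mu_0:=\inf(\sigma(L)\cap(0,\infty))>0$; the three orthogonality conditions then place $f$ in the spectral subspace of $(0,\infty)$, whence $(Lf,f)\ge \mu_0\|f\|_2^2$. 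Nonetheless I would prefer to present the variational argument below, since it is self-contained and adapts to the perturbed and truncated functionals used later.

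First I would prove the weak inequality $\sigma_0\ge 0$ by a min-max argument. Note that $\chi_0$, $Q_{x_1}$, $Q_{x_2}$ are mutually $L^2$-orthogonal: $\chi_0$ is radial while $Q_{x_j}$ is odd in $x_j$, and $Q_{x_1}Q_{x_2}$ is odd in each variable, so all cross integrals vanish. Suppose, for contradiction, that $(Lf,f)<0$ for some $f$ with $(f,\chi_0)=0$. Using $(Lf,\chi_0)=(f,L\chi_0)=-\la_0(f,\chi_0)=0$ and $(L\chi_0,\chi_0)=-\la_0\|\chi_0\|_2^2<0$, every nonzero $g=af+b\chi_0$ satisfies
\[
(Lg,g)=a^2(Lf,f)+b^2(L\chi_0,\chi_0)<0 .
\]
Thus $L$ is negative definite on the two-dimensional space $\mathrm{span}\{f,\chi_0\}$, which by the min-max principle forces $L$ to have at least two negative eigenvalues, contradicting the uniqueness of $-\la_0$ in Theorem \ref{L-prop}. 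Hence $(Lf,f)\ge 0$ for all $f\perp\chi_0$, and in particular $\sigma_0\ge 0$.

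To upgrade to $\sigma_0>0$, I would argue by contradiction and compactness: assume $\sigma_0=0$ and take a minimizing sequence $(f_n)$ with $\|f_n\|_2=1$, satisfying the three constraints and $(Lf_n,f_n)\to0$. Since $(Lf_n,f_n)=\|\nabla f_n\|_2^2+1-p\int Q^{p-1}f_n^2$ and the potential term is bounded by $p\|Q^{p-1}\|_\infty$, the sequence is bounded in $H^1(\R^2)$; extract a weak limit $f_n\rightharpoonup f$. The three constraints pass to the limit because $\chi_0,Q_{x_j}\in L^2$, and weak lower semicontinuity together with $\int Q^{p-1}f_n^2\to\int Q^{p-1}f^2$ give $(Lf,f)\le0$, hence $(Lf,f)=0$ by the previous step. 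Assuming $f\ne0$, the function $f$ minimizes $(Lg,g)$ over the subspace $g\perp\{\chi_0,Q_{x_1},Q_{x_2}\}$, so its first variation vanishes: $(Lf,h)=0$ for every such $h$, i.e. $Lf\in\mathrm{span}\{\chi_0,Q_{x_1},Q_{x_2}\}$. Writing $Lf=\alpha\chi_0+\beta_1Q_{x_1}+\beta_2Q_{x_2}$ and pairing with $\chi_0$ and with $Q_{x_j}$, using $L\chi_0=-\la_0\chi_0$, $LQ_{x_j}=0$, the mutual orthogonality, and the constraints, forces $\alpha=\beta_1=\beta_2=0$. Hence $Lf=0$, so $f\in\ker L$; combined with $f\perp Q_{x_1},Q_{x_2}$ this gives $f=0$, a contradiction.

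The crux, and the only genuinely delicate point, is therefore excluding loss of compactness, namely ruling out $f=0$. This is where the exponential decay \eqref{prop-Q} of $Q$ (hence of $Q^{p-1}$) is essential: it makes $g\mapsto\int Q^{p-1}g^2$ weakly continuous on $H^1(\R^2)$, via the usual split into a large ball $B_R$, on which Rellich gives strong $L^2$ compactness, and its complement, on which $Q^{p-1}$ is uniformly small. If $f=0$ this weak continuity yields $\int Q^{p-1}f_n^2\to0$, so $(Lf_n,f_n)\ge 1-p\int Q^{p-1}f_n^2\to1>0$, contradicting $(Lf_n,f_n)\to0$. Thus $f\ne0$, the argument of the preceding paragraph applies, and $\sigma_0>0$, which proves the lemma.
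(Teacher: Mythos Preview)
Your proof is correct and takes a genuinely different route from the paper. The paper's proof consists of a single sentence invoking a general spectral lemma from Schechter's monograph, which is essentially the spectral-theorem argument you yourself sketch in your opening paragraph: once the eigenspaces for $-\lambda_0$ and $0$ are removed by the orthogonality conditions, and since $0$ is isolated in $\sigma(L)$, the quadratic form is bounded below by $\mu_0\|f\|_2^2$ with $\mu_0=\inf(\sigma(L)\cap(0,\infty))>0$. You instead give a self-contained variational proof: first a min-max argument to get $(Lf,f)\ge0$ on $\chi_0^\perp$ (a negative value would produce, together with $\chi_0$, a two-dimensional negative subspace, contradicting simplicity of $-\lambda_0$), then a minimizing-sequence argument to upgrade to strict positivity, the requisite compactness coming from the exponential decay of $Q^{p-1}$, which makes the potential term weakly continuous on bounded sets in $H^1$. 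Your route is longer but avoids citing an external reference and, as you note, adapts more readily to perturbed or truncated versions of $L$ where the spectral picture is less clean. One small technical remark: when you pass from ``$(Lf,h)=0$ for all $h$ in the constraint subspace'' to ``$Lf\in\mathrm{span}\{\chi_0,Q_{x_1},Q_{x_2}\}$'', a priori $Lf$ lies only in $H^{-1}$; the conclusion is nonetheless correct (the functional $g\mapsto(Lf,g)$ on $H^1$ vanishes on a codimension-three subspace, hence is a combination of the three constraint functionals, and elliptic regularity then places $f$ in $H^2$), but it is worth a line of justification. In fact the detour can be avoided: since $(Lf,\chi_0)=-\lambda_0(f,\chi_0)=0$ and $(Lf,Q_{x_j})=(f,LQ_{x_j})=0$, one already has $(Lf,g)=0$ for every $g\in H^1$, so $Lf=0$ directly.
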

\begin{proof}
The result follows directly from Schechter \cite[Chapter 8, Lemma 7.10]{Sch} (see also \cite[Chapter 1, Lemma 7.17]{Sch})
\end{proof}
\begin{lemma}[Orthogonality Conditions II]\label{Lemma-ort2}
There exist $k_1, k_2>0$ such that for all $\ep\in H^1(\R^2)$ satisfying $\ep \perp \{Q_{x_1}, Q_{x_2}\}$ one has
$$
(L\ep,\ep) = \int |\nabla \ep|^2+\int \ep^2 - p\int Q^{p-1}\ep \geq k_1 \, \|\ep\|_2^2-k_2|(\ep, \chi_0)|^2.
$$
\end{lemma}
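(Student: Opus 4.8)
The plan is to deduce this from Lemma \ref{Lemma-ort1} by peeling off the negative-eigenvalue direction. The first observation I would record is that the eigenfunction $\chi_0$ is itself orthogonal to the kernel directions: since $Q$ is radial, $Q_{x_j}(\vec x)=Q'(|\vec x|)\,x_j/|\vec x|$ is odd in the variable $x_j$, whereas $\chi_0$ is radial and hence even, so $(\chi_0,Q_{x_j})=0$ for $j=1,2$ by parity.

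Given $\ep\perp\{Q_{x_1},Q_{x_2}\}$, I would then split $\ep$ into its $\chi_0$-component and a remainder: set $a=(\ep,\chi_0)/\|\chi_0\|_2^2$ and $\eta=\ep-a\chi_0$, so that $\eta\perp\chi_0$. Because both $\ep$ and $\chi_0$ are orthogonal to $Q_{x_1}$ and $Q_{x_2}$, the remainder $\eta$ satisfies all three orthogonality conditions $(\eta,\chi_0)=(\eta,Q_{x_j})=0$, and Lemma \ref{Lemma-ort1} applies to give $(L\eta,\eta)\ge\sigma_0\|\eta\|_2^2$.

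The computation then proceeds by expanding the quadratic form. Using $L\chi_0=-\la_0\chi_0$ together with $(\chi_0,\eta)=0$, the cross term drops and one finds
$$(L\ep,\ep)=a^2(L\chi_0,\chi_0)+(L\eta,\eta)=-a^2\la_0\|\chi_0\|_2^2+(L\eta,\eta).$$
Invoking the lower bound on $(L\eta,\eta)$ and the Pythagorean identity $\|\eta\|_2^2=\|\ep\|_2^2-a^2\|\chi_0\|_2^2$ yields
$$(L\ep,\ep)\ge\sigma_0\|\ep\|_2^2-(\la_0+\sigma_0)\,a^2\|\chi_0\|_2^2.$$
Finally, substituting $a^2\|\chi_0\|_2^2=|(\ep,\chi_0)|^2/\|\chi_0\|_2^2$ gives the claim with $k_1=\sigma_0$ and $k_2=(\la_0+\sigma_0)/\|\chi_0\|_2^2$.

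Since every step is an explicit and elementary manipulation, there is no real analytic obstacle here; the substantive content is entirely contained in the spectral gap of Lemma \ref{Lemma-ort1}. The only point requiring a moment's care is checking that subtracting the $\chi_0$-component does not destroy orthogonality to the kernel, which is exactly why the parity observation $(\chi_0,Q_{x_j})=0$ is recorded at the outset.
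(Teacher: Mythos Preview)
Your argument is correct and follows essentially the same route as the paper: decompose $\ep$ along $\chi_0$, apply Lemma~\ref{Lemma-ort1} to the remainder, and expand the quadratic form to obtain $k_1=\sigma_0$ and $k_2=(\lambda_0+\sigma_0)\|\chi_0\|_2^{-2}$. Your version is in fact slightly cleaner, since you explicitly record the parity argument showing $(\chi_0,Q_{x_j})=0$ to justify that the remainder $\eta$ satisfies all three orthogonality conditions required by Lemma~\ref{Lemma-ort1}, a point the paper leaves implicit.
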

\begin{proof}
If $\ep \perp \chi_0$, the result is true by the previous lemma. Now, if $(\ep, \chi_0)\neq 0$, let $a\in \R$ be such that $\ep_1=\ep-a\chi_0$ verifies $\ep_1\perp \chi_0$, in other words,
\begin{equation}\label{def-a}
(\ep_1, \chi_0)= (\ep, \chi_0)-a\|\chi_0\|^2_2=0 \Longleftrightarrow a=(\ep, \chi_0)\|\chi_0\|^{-2}_2.
\end{equation}
Therefore, by Lemma \ref{Lemma-ort1} we have
\begin{align}\label{ep_11}
(L\ep_1, \ep_1)\geq \sigma_0 \|\ep_1\|^2_2=\sigma_0(\|\ep\|_2^2-2a(\ep,\chi_0)+a^2\|\chi_0\|_2^2).
\end{align}
On the other hand,
\begin{align}\label{ep_12}
(L\ep_1, \ep_1)=&(L\ep, \ep)-2a(\ep,L\chi_0)+a^2(L\chi_0, \chi_0) \nonumber\\
=&(L\ep, \ep)-2a\lambda_0(\ep,\chi_0)+a^2\lambda_0\|\chi_0\|_2^2.
\end{align}
Collecting \eqref{ep_11} and \eqref{ep_12}, we get
\begin{align*}
(L\ep, \ep)\geq&\sigma_0\|\ep\|^2_2-2a(\sigma_0+\lambda_0)(\ep,\chi_0)+a^2(\sigma_0+\lambda_0)\|\chi_0\|_2^2 \\
=&\sigma_0\|\ep\|^2_2-(\sigma_0+\lambda_0)\|\chi_0\|_2^{-2}|(\ep,\chi_0)|^2,
\end{align*}
where we have used \eqref{def-a} in the last line. The result follows by taking $k_1=\sigma_0$ and $k_2=(\sigma_0+\lambda_0)\|\chi_0\|_2^{-2}$.
\end{proof}
Following \cite{Co10}, we introduce the Lyapunov-type functional, connecting with the linearized operator $L$ and obtain the upper bounds on it.
\begin{lemma}[Weinstein's Functional]\label{Weinstein-Functional}
Recall \eqref{MC}-\eqref{EC} and define
$$
W[u]=E[u]+\frac12 M[u].
$$
Then
\begin{equation}\label{FLH}
W[Q+\ep]=W[Q]+\frac12 (L\ep, \ep)+ K[\ep]
\end{equation}
with $K: H^1 \to \cR$, and if $\|\ep\|_{H^1}\leq 1$, there exists $C>0$ such that
\begin{equation}\label{H}
|H[\ep]|\leq C \, \|\ep\|_{H^1}\|\ep\|^2_{2}.
\end{equation}
\end{lemma}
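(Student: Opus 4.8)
The plan is to substitute $u = Q + \ep$ into the definitions \eqref{MC}--\eqref{EC} of $M$ and $E$ and organize the outcome by order of homogeneity in $\ep$. Writing the nonlinear part of the energy as $-\frac{1}{p+1}\int(Q+\ep)^{p+1}$, I would expand it pointwise to second order,
\begin{equation*}
(Q+\ep)^{p+1} = Q^{p+1} + (p+1)\,Q^{p}\ep + \frac{(p+1)p}{2}\,Q^{p-1}\ep^2 + R[\ep],
\end{equation*}
and expand the quadratic pieces $\frac12\int|\nabla(Q+\ep)|^2$ and $\frac12\int(Q+\ep)^2$ exactly. Collecting terms then gives $W[Q+\ep]=W[Q]+(\text{linear})+(\text{quadratic})+K[\ep]$, where $K[\ep]=-\frac{1}{p+1}\int R[\ep]$ is precisely the remainder denoted $H[\ep]$ in \eqref{H}.

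First I would dispose of the linear terms. After one integration by parts they sum to $\int(-\Delta Q + Q - Q^{p})\ep$, which vanishes identically by the ground-state equation $-\Delta Q + Q - Q^p = 0$. The quadratic terms are $\frac12\int|\nabla\ep|^2 + \frac12\int\ep^2 - \frac{p}{2}\int Q^{p-1}\ep^2$, which by the definition \eqref{L-def} of $L$ and one more integration by parts equals exactly $\frac12(L\ep,\ep)$. This establishes the algebraic identity \eqref{FLH}, leaving only the bound \eqref{H} on $K[\ep]=H[\ep]$.

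For the remainder I would use the integral form of Taylor's theorem to obtain the pointwise estimate $|R[\ep]|\lesssim \int_0^1 |Q+s\ep|^{p-2}|\ep|^3\,ds \lesssim Q^{p-2}|\ep|^3 + |\ep|^{p+1}$, valid since $p-2>0$ (for non-integer $p$ one works with $\frac{1}{p+1}|u|^{p+1}$ and the same bound holds). Integrating yields $|H[\ep]|\lesssim \|Q^{p-2}\|_{L^\infty}\|\ep\|_3^3 + \|\ep\|_{p+1}^{p+1}$. The two-dimensional Gagliardo--Nirenberg inequality then controls both norms: $\|\ep\|_3^3 \lesssim \|\ep\|_2^2\,\|\nabla\ep\|_2$ and $\|\ep\|_{p+1}^{p+1}\lesssim \|\ep\|_2^2\,\|\nabla\ep\|_2^{p-1}$. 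Using $\|\nabla\ep\|_2\leq \|\ep\|_{H^1}$ together with the smallness hypothesis $\|\ep\|_{H^1}\leq 1$ and $p-1>1$ to replace $\|\nabla\ep\|_2^{p-1}$ by $\|\ep\|_{H^1}$, both contributions are bounded by $\|\ep\|_{H^1}\|\ep\|_2^2$, which is exactly \eqref{H}.

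The main obstacle is the remainder estimate, not the algebraic identity: since $H^1(\R^2)$ does not embed in $L^\infty$, one cannot simply extract a factor $\|\ep\|_{L^\infty}$, so it is essential to distribute the powers of $\ep$ so that exactly two factors are measured in $L^2$ (producing $\|\ep\|_2^2$) while the surplus positive power of $\|\nabla\ep\|_2$ is absorbed via $\|\ep\|_{H^1}\leq 1$. Keeping this bookkeeping consistent across both the $Q^{p-2}|\ep|^3$ and the $|\ep|^{p+1}$ terms is the only delicate point.
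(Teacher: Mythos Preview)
Your proof is correct and follows essentially the same route as the paper: expand $W[Q+\ep]$ to second order in $\ep$, cancel the linear terms via the ground-state equation, identify the quadratic form as $\tfrac12(L\ep,\ep)$, and bound the higher-order remainder by the two-dimensional Gagliardo--Nirenberg inequality. The only difference is that the paper writes the remainder explicitly via the binomial expansion $\sum_{k=3}^{p+1}\binom{p+1}{k}Q^{p+1-k}\ep^k$ (tacitly assuming integer $p$) and then bounds each $\|\ep\|_k^k$, whereas your Taylor integral remainder $|R[\ep]|\lesssim Q^{p-2}|\ep|^3+|\ep|^{p+1}$ handles non-integer $p$ as well; both feed identically into the same Gagliardo--Nirenberg estimate.
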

\begin{proof}
A straighforward calculation reveals
\begin{align} \label{E:energy}
E[Q+\ep]  &= \frac{1}{2}\int |\nabla(Q+\ep)|^2-\frac{1}{p+1}\int (Q+\ep)^{p+1} \nonumber\\
&= E[Q] +\frac12\int |\nabla \ep|^2+\int \nabla Q\cdot \nabla \ep-\frac{1}{p+1}\int \left((p+1)Q^p\ep+\frac{(p+1)p}{2}Q^{p-1}\ep^2+G[\ep]\right) \nonumber\\
& =E[Q] +\frac12\int |\nabla \ep|^2-\int (\Delta Q+Q^p)\ep-\frac{p}{2}\int Q^{p-1}\ep^2-\frac{1}{p+1}G[\ep] \nonumber\\
& =E[Q] +\frac12\int |\nabla \ep|^2-\int Q\ep-\frac{p}{2}\int Q^{p-1}\ep^2+H[\ep],
\end{align}
where $H[\ep]=-\frac{1}{p+1}\int G[\ep]$ with\footnote{Recall that $\binom{p+1}{k}=\frac{(p+1)!}{(p+1-k)!k!}$.} $\displaystyle G[\ep]=\sum^{p+1}_{k=3}\binom{p+1}{k}Q^{p+1-k}\ep^k$ and we have used $Q=\Delta Q+ Q^p$ in the last line.

Since $Q\in L^{\infty}(\R^2)$, we use the Gagliardo-Nirenberg inequality
\begin{equation}\label{GN}
\|f\|_q^q\leq c \|\nabla f\|_2^{q-2}\|f\|_2^2, \quad q\geq 2,
\end{equation}
to deduce that, if $\|\ep\|_{H^1}\leq 1$, then
$$
|H[\ep]|\leq c \sum_{k=3}^{p+1}\|\ep\|_k^k \leq C \|\ep\|_{H^1}\|\ep\|_2^2,
$$
for some constant $C >0$. Next, noticing that
\begin{equation}\label{E:mass}
M[Q+\ep]=\int Q^2+2\int Q\ep +\int \ep^2,
\end{equation}
and putting together \eqref{E:mass} and \eqref{E:energy}, we obtain \eqref{FLH}.
\end{proof}

\section{Decomposition of $u$ and Modulation Theory}\label{S-4}

We consider the canonical parametrization of the solution $u(t, x_1, x_2)$ close to $Q$:
\begin{equation}
\label{ep-def}
\ep(t,x_1,x_2) = u(t, x_1+y_1(t), x_2+y_2(t)) - Q(x_1,x_2),
\end{equation}
where $u$ is a solution of \eqref{gZK} and $y_1(t)$, $y_2(t)$ are two $C^1$ functions to be determined later. In the next lemma we deduce the equation for $\ep(t,\vec{x})$.
\begin{lemma}[Equation for $\ep$]
There exists $C_0>0$ such that for all $t \geq 0$, we have
\begin{align}\label{ep1-order1}
\ep_t - (L \ep)_{x_1} =(y'_1(t)-1)(Q+\ep)_{x_1} + y'_2(t) (Q +\ep)_{x_2} - R(\ep),
\end{align}
where
\begin{align}\label{error}
|R(\ep)|\leq C_0\sum_{k=2}^p(|\ep|^k+|\ep_{x_1}||\ep|^{k-1}).
\end{align}
\end{lemma}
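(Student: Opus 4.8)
The plan is to derive the equation for $\ep$ by directly differentiating the defining relation \eqref{ep-def} in time and substituting the gZK equation \eqref{gZK}. Writing $\ep(t,\vec{x}) = u(t, \vec{x}+\vec{y}(t)) - Q(\vec{x})$ with $\vec{y} = (y_1,y_2)$, I would compute $\ep_t = u_t(t,\vec{x}+\vec{y}) + y_1'(t)\, u_{x_1}(t,\vec{x}+\vec{y}) + y_2'(t)\, u_{x_2}(t,\vec{x}+\vec{y})$, where the chain rule produces the $y_j'$ terms from the moving frame. The key substitution is to replace $u_t$ using \eqref{gZK}, namely $u_t = -\partial_{x_1}(\Delta u + u^p)$, evaluated at the shifted point $\vec{x}+\vec{y}$; since spatial derivatives commute with the translation by $\vec{y}(t)$, these become $-\partial_{x_1}(\Delta + (\,\cdot\,)^p)$ acting on $u(t,\vec{x}+\vec{y}) = Q(\vec{x}) + \ep(t,\vec{x})$.

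Next I would expand everything in terms of $Q$ and $\ep$. The translated solution equals $Q + \ep$, so the leading dispersive-plus-nonlinear part gives $-\partial_{x_1}\big(\Delta(Q+\ep) + (Q+\ep)^p\big)$. Using the ground state equation $-\Delta Q + Q - Q^p = 0$, i.e. $\Delta Q + Q^p = Q$, the $Q$-only contribution collapses to $-\partial_{x_1}Q$, which I intend to group with the $y_j'$ terms. For the linear-in-$\ep$ part I would expand $(Q+\ep)^p = Q^p + pQ^{p-1}\ep + (\text{higher order})$, so that $\Delta \ep + pQ^{p-1}\ep = (\Delta + pQ^{p-1})\ep = -(L\ep - \ep)$ by the definition \eqref{L-def} of $L = -\Delta + 1 - pQ^{p-1}$. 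Assembling these pieces, the principal linear term becomes $(L\ep)_{x_1}$ after moving $\partial_{x_1}$ through, and the remaining quadratic-and-higher terms in the Taylor expansion of $(Q+\ep)^p$, together with their $x_1$-derivatives from the outer $\partial_{x_1}$, get collected into $R(\ep)$. The $y_j'$ coefficients multiply $(Q+\ep)_{x_j}$ exactly as in \eqref{ep1-order1}, with the $-Q_{x_1}$ from the ground state equation combining to yield the $(y_1'(t)-1)$ prefactor.

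The remaining task is to verify the pointwise bound \eqref{error} on $R(\ep)$. By construction $R(\ep) = \partial_{x_1}\big((Q+\ep)^p - Q^p - pQ^{p-1}\ep\big)$, so I would write the bracket via Taylor's theorem as $\sum_{k=2}^{p}\binom{p}{k}Q^{p-k}\ep^k$ (a finite sum since $p$ is an integer power here). Differentiating with $\partial_{x_1}$ produces terms of two types: those where the derivative falls on $\ep^k$, giving $Q^{p-k}\ep^{k-1}\ep_{x_1}$, and those where it falls on $Q^{p-k}$, giving $Q^{p-k-1}Q_{x_1}\ep^k$. Since $Q$ and $Q_{x_1}$ are bounded by the decay estimate \eqref{prop-Q}, each term is controlled by a constant times $|\ep|^k$ or $|\ep_{x_1}||\ep|^{k-1}$, which is precisely the claimed form \eqref{error} after summing over $2 \le k \le p$ and choosing $C_0$ to absorb the binomial coefficients and the $L^\infty$ norms of $Q$.

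I expect the only genuine subtlety, rather than a real obstacle, to be the careful bookkeeping in separating the $(y_1'-1)$ versus $y_2'$ groupings and ensuring that the $-\partial_{x_1}Q$ term produced by the ground state relation is correctly merged into the $(y_1'(t)-1)(Q+\ep)_{x_1}$ coefficient rather than left as a stray term; everything else is a routine Taylor expansion and an application of the exponential bounds on $Q$. The computation is entirely algebraic and pointwise, so no functional-analytic input beyond the definition of $L$ and the elliptic equation for $Q$ is needed.
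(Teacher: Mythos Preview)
Your proposal is correct and follows essentially the same route as the paper: direct time-differentiation of the decomposition \eqref{ep-def}, substitution of \eqref{gZK}, binomial expansion of $(Q+\ep)^p$, use of the ground state equation $\Delta Q + Q^p = Q$ together with the definition \eqref{L-def} of $L$ to isolate $(L\ep)_{x_1}$, and the same pointwise bound on $R(\ep)$ via $Q, Q_{x_1}\in L^\infty$. The only cosmetic difference is that the paper differentiates the relation $u(t,x)=Q(x-\vec y(t))+\ep(t,x-\vec y(t))$ in $t$ rather than $\ep(t,x)=u(t,x+\vec y(t))-Q(x)$, which amounts to the same chain-rule computation.
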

\begin{proof}
By definition \eqref{ep-def}, we have
$$
u(t,x_1,x_2)=Q(x_1-y_1(t), x_2-y_2(t))+\ep(t, x_1-y_1(t), x_2-y_2(t)).
$$
Since $u$ is a solution of \eqref{gZK}, we have
\begin{equation}\label{ep-eq2}
y_1'(t)(Q+\ep)_{x_1}+y_2'(t)(Q+\ep)_{x_2}-\ep_t=\partial_{x_1}\Delta (Q+\ep)+(Q^p)_{x_1}+p(Q^{p-1}\ep)_{x_1}+R(\ep),
\end{equation}
where
\begin{equation}\label{R(ep)}
R(\ep)=\partial_{x_1}\left(\sum_{k=2}^p \binom{p}{k}Q^{p-k}\ep^k\right)=\sum_{k=2}^p \binom{p}{k}((p-k)Q_{x_1}Q^{p-k-1}\ep^k+kQ^{p-k}\ep_{x_1}\ep^{k-1}).
\end{equation}
Recalling that $Q, Q_{x_1}\in L^{\infty}(\R^2)$, there exists a universal constant $C_0>0$ such that the estimate \eqref{error} holds.

Using that $-\Delta Q+Q  - Q^p = 0$ and the definition of $L$ from \eqref{L-def}, we rewrite the equation \eqref{ep-eq2} as
$$
-\ep_t+(L\ep)_{x_1}-\ep_{x_1} =Q_{x_1}-y_1'(t)(Q+\ep)_{x_1}-y_2'(t)(Q+\ep)_{x_2}+R(\ep),
$$
which implies the equation \eqref{ep1-order1}, concluding the proof.
\end{proof}

Next we recall the modulation theory close to the ground state $Q$, as in de Bouard \cite{DeB} (see also Bona-Souganidis-Strauss \cite{BSS} for a similar result in the gKdV model).
\begin{proposition}[Modulation Theory]\label{ModThI}
There exists $\alpha_1 >0$, $C_1>0$ and a unique $C^1$ map
$$
(y_1,y_2): U_{{\alpha_1}}\rightarrow \mathbb{R}^2
$$
such that if $u\in U_{{\alpha_1}}$ and $\ep_{({y_1}(u), {y_2}(u))}$ is given by
\begin{equation}\label{ep-def2}
\ep_{({y_1}(u), {y_2}(u))}(x_1,x_2) = u( x_1 +y_1(u), x_2 + y_2(u)) - Q(x_1,x_2),
\end{equation}
then
$$
\ep_{({y_1}(u), {y_2}(u))} \perp Q_{x_j}, \quad j=1,2.
$$
Moreover, if $u\in U_{{\alpha}}$ with $0<\alpha<\alpha_1$, then
\begin{equation}\label{ep-control}
\|\ep_{({y_1}(u), {y_2}(u))}\|_{H^1}\leq C_1\alpha.
\end{equation}

Furthermore, if $u$ is cylindrically symmetric (i.e., $u(x_1,x_2)=u(x_1, |x_2|)$), then, reducing $\alpha_0$ if necessary, we can assume $y_2(u)=0$.
\end{proposition}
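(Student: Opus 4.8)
The plan is to apply the Implicit Function Theorem to the two orthogonality relations and then propagate the resulting local map across the tube $U_{\alpha_1}$ by translation equivariance. First I would encode the conditions $\ep_{\vec{y}} \perp Q_{x_j}$, where $\ep_{\vec{y}}(\vec{x}) = u(\vec{x} + \vec{y}) - Q(\vec{x})$, as the vanishing of a map $F = (F_1, F_2) : H^1(\cR^2) \times \cR^2 \to \cR^2$. After the change of variables $\vec{x} \mapsto \vec{x} + \vec{y}$ and the identity $(Q, Q_{x_j}) = \tfrac12 \int (Q^2)_{x_j} = 0$, the condition $(\ep_{\vec{y}}, Q_{x_j}) = 0$ reads
\[
F_j(u, \vec{y}) := \bigl( u, \, Q_{x_j}(\cdot - \vec{y}) \bigr) = 0, \qquad j = 1,2 .
\]
The key point that makes this tractable is that the translation now acts on the smooth, exponentially decaying profile $Q_{x_j}$ rather than on the rough datum $u$: since $\vec{y} \mapsto Q_{x_j}(\cdot - \vec{y})$ is smooth into $H^1$ by \eqref{prop-Q}, the map $F$ is $C^1$ in $(u, \vec{y})$, linear in $u$, and satisfies $F(Q, 0) = 0$.

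Next I would compute the Jacobian in $\vec{y}$ at the base point. Differentiating under the pairing and integrating by parts gives $\pa_{y_k} F_j(Q, 0) = (Q_{x_k}, Q_{x_j})$. Because $Q$ is radial, the off-diagonal entries $(Q_{x_1}, Q_{x_2})$ vanish (odd integrand) while the diagonal entries equal $\tfrac12 \|\na Q\|_2^2 > 0$, so the Jacobian equals $\tfrac12 \|\na Q\|_2^2 \,\mathrm{Id}$, which is invertible. The Implicit Function Theorem then furnishes $\beta > 0$ and a unique $C^1$ map $\vec{y} : B_{H^1}(Q, \beta) \to \cR^2$ with $\vec{y}(Q) = 0$ and $F(u, \vec{y}(u)) = 0$ on this $H^1$-ball, i.e. $\ep_{\vec{y}(u)} \perp Q_{x_1}, Q_{x_2}$.

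To pass from the ball to the tube, I would exploit that $F$ is translation equivariant. If $u \in U_{\alpha_1}$, there is $\vec{z}$ with $\|u(\cdot + \vec{z}) - Q\|_{H^1} \le 2\alpha_1$, so for $\alpha_1 < \beta/2$ the translate lies in $B_{H^1}(Q, \beta)$, and I define $(y_1(u), y_2(u))$ by composing the local map with this translation. Local uniqueness from the IFT, together with nondegeneracy of the Jacobian along the entire $Q$-orbit, shows that the result is independent of the choice of $\vec{z}$ and globally $C^1$ on $U_{\alpha_1}$. The bound \eqref{ep-control} then follows from $\vec{y}(Q) = 0$ and the Lipschitz dependence supplied by the $C^1$ regularity: for $u \in U_\alpha$ one picks $\vec{z}$ with $\|u(\cdot + \vec{z}) - Q\|_{H^1} \le 2\alpha$ and estimates $\|\ep_{\vec{y}(u)}\|_{H^1} \le \|u(\cdot + \vec{z}) - Q\|_{H^1} + \|Q(\cdot + (\vec{y}(u) - \vec{z})) - Q\|_{H^1} \lesssim \alpha$, the last term being controlled because $\vec{y}(u) - \vec{z} = O(\alpha)$.

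Finally, for the cylindrical symmetry I would combine uniqueness with a parity argument. If $u$ is even in $x_2$, then taking $y_2 = 0$ makes $\ep_{(y_1, 0)}$ even in $x_2$ while $Q_{x_2}$ is odd, so $(\ep_{(y_1, 0)}, Q_{x_2}) = 0$ holds automatically; the remaining scalar condition $(\ep_{(y_1, 0)}, Q_{x_1}) = 0$ is solved by a one-dimensional Implicit Function Theorem in $y_1$ alone, and uniqueness of the full two-dimensional map forces $y_2(u) = 0$. I expect the main obstacle to be the passage from the local $H^1$-ball statement to a single-valued $C^1$ map on the entire tube $U_{\alpha_1}$: one must rule out ambiguity in the choice of the orthogonalizing translation and check that the Jacobian stays uniformly nondegenerate along the soliton orbit, which is precisely what allows the local branches to be glued into one globally defined map.
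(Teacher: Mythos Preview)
Your argument is correct and is precisely the standard Implicit Function Theorem approach to modulation theory; the paper itself does not give a proof but simply refers to \cite[Lemma 4.4]{DeB}, whose argument is of exactly this type (IFT applied to the orthogonality functionals, with the Jacobian $(Q_{x_j},Q_{x_k})$ nondegenerate by radiality of $Q$, followed by translation equivariance and a parity observation for the cylindrically symmetric case).
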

\begin{proof}
See \cite[Lemma 4.4]{DeB}.
\end{proof}

Now, assume that $u(t)\in U_{\alpha}$, with $\alpha<\alpha_1$, for all $t\geq 0$, and define the functions $y_1(t)$ and $y_2(t)$ as follows.
\begin{definition}\label{eps}
For all $t\geq 0$, let $y_1(t)$ and $y_2(t)$ be such that $\ep_{({y_1}(t), y_2(t))}$, defined according to the equation \eqref{ep-def2}, satisfy
\begin{equation}\label{ep-perp2}
\ep_{({y_1}(t), y_2(t))} \perp Q_{x_j}, \quad j=1,2.
\end{equation}
In this case, we set
\begin{equation}\label{eq-ep2}
\ep(t)=\ep_{({y_1}(t), y_2(t))}=u( x_1 +y_1(t), x_2+y_2(t)) - Q(x_1,x_2).
\end{equation}
\end{definition}
From the estimate \eqref{ep-control}, it is clear that
\begin{equation}\label{ep-control2}
\|\ep(t)\|_{H^1}\leq C_1\alpha.
\end{equation}
The next lemma provides us with the estimates for $|y'_1-1|$ and $|y'_2|$.

\begin{lemma}[Control of the modulation parameters]\label{Lemma-param}
There exists $0<\alpha_2<\alpha_1$ such that if for all $t\geq 0$, $u(t)\in U_{\alpha_2}$, then $y_1(t)$ and $y_2(t)$ are $C^1$ functions of $t$ and they satisfy the following equations
\begin{eqnarray*}
(y'_1-1)[\|Q_{x_1}\|_2^2-(\ep, Q_{x_1x_1})]-y'_2(\ep, Q_{x_1x_2})=(L(Q_{x_1x_1}),\ep)+(R(\ep), Q_{x_1})
\end{eqnarray*}
and
\begin{eqnarray*}
-(y'_1-1)(\ep, Q_{x_1x_2})+y'_2[\|Q_{x_2}\|_2^2-(\ep, Q_{x_2x_2})]=(L(Q_{x_1x_2}),\ep)+(R(\ep), Q_{x_2}).
\end{eqnarray*}

Moreover, there exists a universal constant $C_2>0$ such that if $\|\ep(t)\|_2\leq \alpha$, for all $t\geq 0$, where $\alpha<\alpha_2$, then
\begin{equation}\label{ControlParam}
|y'_1-1|+|y'_2|\leq C_2 \|\ep(t)\|_2.
\end{equation}
\end{lemma}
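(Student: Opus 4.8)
The plan is to obtain both the identities and the bound \eqref{ControlParam} by differentiating the orthogonality conditions \eqref{ep-perp2} in time and then solving the resulting $2\times 2$ linear system for the vector $(y_1'-1,\,y_2')$. The $C^1$-regularity of $y_1,y_2$ in $t$ is inherited from the $C^1$ map supplied by Proposition \ref{ModThI} composed with the flow $t\mapsto u(t)$; equivalently, it follows from the implicit function theorem applied to $F_j(t,y_1,y_2):=(\ep_{(y_1,y_2)}(t),Q_{x_j})=0$, whose Jacobian in $(y_1,y_2)$ is exactly the (invertible, see below) matrix appearing in the system. Granting this, I would differentiate $(\ep(t),Q_{x_j})=0$ in $t$; since $Q_{x_j}$ is $t$-independent this gives $(\ep_t,Q_{x_j})=0$ for $j=1,2$.

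Next I would substitute the equation \eqref{ep1-order1}, namely $\ep_t=(L\ep)_{x_1}+(y_1'-1)(Q+\ep)_{x_1}+y_2'(Q+\ep)_{x_2}-R(\ep)$, into each relation $(\ep_t,Q_{x_j})=0$ and evaluate the resulting inner products. The computation uses three elementary facts: (i) integration by parts to move the $x_1$-derivative off $L\ep$, so that $((L\ep)_{x_1},Q_{x_j})=-(L\ep,Q_{x_1x_j})$; (ii) self-adjointness of $L$ (Theorem \ref{L-prop}) to rewrite this as $-(\ep,LQ_{x_1x_j})$; and (iii) the parity of the radial $Q$, which gives $(Q_{x_1},Q_{x_2})=0$ together with $(\ep_{x_i},Q_{x_j})=-(\ep,Q_{x_ix_j})$. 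Collecting terms produces precisely the two stated equations, with left-hand matrix
\[
A=\begin{pmatrix}\|Q_{x_1}\|_2^2-(\ep,Q_{x_1x_1}) & -(\ep,Q_{x_1x_2})\\[2pt] -(\ep,Q_{x_1x_2}) & \|Q_{x_2}\|_2^2-(\ep,Q_{x_2x_2})\end{pmatrix}
\]
and right-hand side vector $b$ whose components are $(LQ_{x_1x_1},\ep)+(R(\ep),Q_{x_1})$ and $(LQ_{x_1x_2},\ep)+(R(\ep),Q_{x_2})$.

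For the quantitative bound I would first note that, by radiality of $Q$, $\|Q_{x_1}\|_2=\|Q_{x_2}\|_2\neq0$, so $A$ is a perturbation of the invertible diagonal matrix $\mathrm{diag}(\|Q_{x_1}\|_2^2,\|Q_{x_2}\|_2^2)$ whose diagonal and off-diagonal corrections are all $O(\|\ep\|_2)=O(\alpha)$; hence for $\alpha_2$ small enough $A$ is invertible with $\|A^{-1}\|$ bounded uniformly in $t$. Then I would show $|b|\lesssim\|\ep\|_2$: the linear pieces obey $|(LQ_{x_1x_j},\ep)|\le\|LQ_{x_1x_j}\|_2\,\|\ep\|_2$, since $LQ_{x_1x_j}$ is a fixed exponentially decaying (hence $L^2$) function by \eqref{prop-Q}. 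The nonlinear pieces $(R(\ep),Q_{x_j})$ are handled using \eqref{error}: every summand of $R(\ep)$ carries at least a quadratic factor in $\ep$, so pairing one factor of $\ep$ against the $L^2$ function $Q_{x_j}$ by Cauchy--Schwarz isolates a single $\|\ep\|_2$, while the remaining factors are controlled by the bounded higher $L^q$-norms of $\ep$ (finite $q$), which are in turn bounded through the two-dimensional Gagliardo--Nirenberg inequality \eqref{GN} and the uniform smallness $\|\ep\|_{H^1}\le C_1\alpha$ from \eqref{ep-control2}. Applying $A^{-1}$ then yields $|y_1'-1|+|y_2'|\le C_2\|\ep(t)\|_2$.

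The main obstacle I anticipate is the clean bookkeeping of the term $(R(\ep),Q_{x_j})$: one must check, for every power $2\le k\le p$ and including the derivative terms $|\ep_{x_1}||\ep|^{k-1}$, that exactly one $L^2$-factor of $\ep$ can be extracted while the leftover factors (of the form $|\ep|^{k-1}$ or $|\ep_{x_1}||\ep|^{k-2}$) stay uniformly bounded, which in two dimensions relies on $H^1\hookrightarrow L^q$ for all finite $q$ rather than on an $L^\infty$ embedding. A secondary, more technical point is the rigorous justification of differentiating $(\ep(t),Q_{x_j})$ in $t$ for merely $H^1$-valued solutions; this is resolved by the regularity already encoded in Proposition \ref{ModThI} (or a standard approximation argument), and I would treat it as routine.
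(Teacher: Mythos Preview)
Your proposal is correct and follows essentially the same route as the paper: differentiate the orthogonality conditions, insert the $\ep$-equation, use self-adjointness of $L$, the orthogonality $Q_{x_1}\perp Q_{x_2}$, and then invert the resulting $2\times2$ system whose matrix is a small perturbation of $\mathrm{diag}(\|Q_{x_1}\|_2^2,\|Q_{x_2}\|_2^2)$. The only cosmetic difference is that the paper multiplies \eqref{ep-eq2} directly by $Q_{x_j}$ (equivalent to your substitution of \eqref{ep1-order1}) and does not spell out the Gagliardo--Nirenberg bookkeeping for $(R(\ep),Q_{x_j})$ that you outline; your treatment of that term is a valid elaboration, and in fact integrating by parts once more via \eqref{R(ep)} to write $(R(\ep),Q_{x_j})=-\sum_k\binom{p}{k}\int Q^{p-k}\ep^k\,Q_{x_1x_j}$ would let you avoid the $\ep_{x_1}$ factors altogether.
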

\begin{proof}
Multiplying the equation \eqref{ep-eq2} by $Q_{x_1}$ and then by $Q_{x_2}$ and integrating by parts, we deduce
$$
\int \ep_t Q_{x_1}+\int L(\ep)Q_{x_1x_1}=(y'_1(t)-1)\left[\int Q^2_{x_1}-\ep Q_{x_1x_1}\right]+y'_2(t)\left[\int Q_{x_1}Q_{x_2}-\ep Q_{x_1x_2}\right]-\int R(\ep)Q_{x_1}
$$
and
$$
\int \ep_t Q_{x_2}+\int L(\ep)Q_{x_1x_2}=(y'_1(t)-1)\left[\int Q_{x_1}Q_{x_2}-\ep Q_{x_1x_2}\right]+y'_2(t)\left[\int Q^2_{x_2}-\ep Q_{x_2x_2}\right]-\int R(\ep)Q_{x_2}.
$$
We first note that the first term on the left hand sides of the last two equalities is zero in view of the orthogonality \eqref{ep-perp2}. Moreover, we also have $Q_{x_1}\perp Q_{x_2}$, thus, vanishing a couple of terms on the right hand sides. Therefore, using that $L$ is a self-adjoint operator, we obtain the following system
\begin{eqnarray*}
(y'_1-1)[\|Q_{x_1}\|_2^2-(\ep, Q_{x_1x_1})]-y'_2(\ep, Q_{x_1x_2})=(L(Q_{x_1x_1}),\ep)+(R(\ep), Q_{x_1})
\end{eqnarray*}
and
\begin{eqnarray*}
-(y'_1-1)(\ep, Q_{x_1x_2})+y'_2[\|Q_{x_2}\|_2^2-(\ep, Q_{x_2x_2})]=(L(Q_{x_1x_2}),\ep)+(R(\ep), Q_{x_2}).
\end{eqnarray*}
Next, we observe that there exists $\alpha_2>0$ such that
\begin{align*}
\begin{vmatrix}
\|Q_{x_1}\|_2^2-(\ep, Q_{x_1x_1}) & \quad -(\ep, Q_{x_1x_2}) \\
-(\ep, Q_{x_1x_2}) & \quad \|Q_{x_2}\|_2^2-(\ep, Q_{x_2x_2})
\end{vmatrix}
\geq \frac12 \|Q_{x_1}\|_2^2\|Q_{x_2}\|_2^2,
\end{align*}
if $\|\ep(t)\|_{2}\leq \alpha$ for every $t\geq 0$, where $\alpha<\alpha_2$.
Finally, we can find $C_2>0$ such that
\begin{align*}
|y'_1(t)-1|\leq &\left[ |(L(Q_{x_1x_1}),\ep)+(R(\ep), Q_{x_1})|\cdot|\|Q_{x_2}\|_2^2-(\ep, Q_{x_2x_2})|\right.\\
&\left.+|(L(Q_{x_1x_2}),\ep)+(R(\ep), Q_{x_2})|\cdot|(\ep, Q_{x_1x_2})|\right]\left[\frac12 \|Q_{x_1}\|_2^2\|Q_{x_2}\|_2^2\right]^{-1}\\
\leq & C_2 \|\ep(t)\|_{2}.
\end{align*}
Similarly, we obtain $|y'_2(t)|\leq C_2 \|\ep(t)\|_{2}$, concluding the proof.
\end{proof}

\section{Virial-type estimates and monotonicity}\label{S-5}
Our next step is to produce a virial-type functional which will help us study the stability properties of the solutions close to $Q$.
We first define a quantity depending on the $\ep$ variable, which incorporates the scaling generator $\Lambda$ and the eigenfunction of $L$ for the negative eigenvalue. For the gKdV version, compare with Combet \cite[\S 2.3]{Co10}. This turns out to play an important role in our instability proof, and what is absolutely crucial here is that we can find $\beta \neq 0$. We note that this does not work in the critical case ($p=3$), nor in the critical gKdV equation (with $p=5$ nonlinearity), since $\beta$ becomes zero. We now let
\begin{equation*}
F(x_1,x_2)=\int_{-\infty}^{x_1}(\Lambda Q (z, x_2)+\beta \chi_0(z,x_2)) \, dz,
\end{equation*}
where $\beta\in \R$ is a constant to be chosen later.

From the properties of $Q$ (see \eqref{prop-Q}) and $\chi_0$ (see \eqref{chi_0}), there exist $c, \delta>0$ such that
\begin{equation}\label{F-decay}
|F(x_1,x_2)|\leq c e^{-\frac{\delta}{2}|x_2|}\int_{-\infty}^{x_1}e^{-\frac{\delta}{2}|z|}dz,
\end{equation}
and therefore, $|F(x_1,x_2)|=o(e^{\frac{\delta}{2}x_1})$ when $x_1\rightarrow -\infty$ for every $x_2\in \R$ fixed. We also note that $F$ is a bounded function in $\R^2$, that is, $F\in L^{\infty}(\R^2)$.


We next define the virial-type functional
\begin{equation}\label{def-JA}
J(t)=\int_{\R^2}\ep(t)F(x_1,x_2)\, dx_1dx_2,
\end{equation}
and would like to show that it is well-defined. The first observation is that $J(t)$ is well-defined if $\ep(t)\in H^{3+}(\R^2)$ for all $t\geq0$. Indeed, this is a consequence of \cite[Theorem 2.3]{DeB}, and following the argument on pages 103-104 of \cite{DeB}, we find a universal constant $C_3>0$ independent of $t$ such that
\begin{equation}\label{Bound-J}
|J(t)|\leq C_3(t^{-3/4}+t^{1/2}).
\end{equation}
We remark that while our functional $J$ differs from the one used in \cite{DeB}, the estimate above is the same, see (4.2) in \cite{DeB}.

Our next task is to show that $J(t)$ is well-defined only assuming $\ep(t)\in H^{1}(\R^2)$ for all $t\geq0$. To this end, we adapt some monotonicity ideas introduced by Martel-Merle \cite{MM-KdV5} for the gKdV equation. Define
$$
\psi(x_1)=\frac{2}{\pi}\arctan{(e^{\frac{x_1}{M}})},
$$
where $M\geq 4$.
The following properties hold for $\psi$
\begin{enumerate}
\item
$\displaystyle \psi(0)=\frac{1}{2}$,\\

\item
$\displaystyle \lim_{x_1\rightarrow -\infty} \psi(x_1)=0$ and $\displaystyle \lim_{x_1\rightarrow +\infty} \psi(x_1)=1$,\\

\item
$1-\psi(x_1)=\psi(-x_1)$,\\

\item
$\displaystyle \psi^{\prime}(x_1)=\left(\pi M \cosh\left(\frac{x_1}{M}\right)\right)^{-1}$,\\

\item
$\displaystyle \left| \psi^{\prime\prime\prime}(x_1)\right|\leq \frac{1}{M^2}\psi^{\prime}(x_1)\leq \frac{1}{16}\psi^{\prime}(x_1)$.\\
\end{enumerate}

Let $(y_1(t), y_2(t))\in C^1(\R, \R^2)$ and for $x_0, t_0>0$ and $t\in [0,t_0]$ define
\begin{equation}\label{E:I}
I_{x_0,t_0}(t)=\int u^2(t, x_1, x_2)\psi(x_1-y_1(t_0)+\frac{1}{2}(t_0-t)-x_0)dx_1dx_2,
\end{equation}
where $u\in C(\R, H^1(\R^2))$ is a solution of the gZK equation \eqref{gZK}, satisfying
\begin{equation}\label{u-Q}
\|u(t, x_1+y_1(t), x_2+y_2(t))-Q(x_1,x_2)\|_{H^1}\leq \alpha,
\end{equation}
for some $\alpha>0$. While this is a similar concept to study the decay of the mass of the solution to the right of the soliton, as it was done in the gKdV equation in works of Martel-Merle, or for example, see our review of the instability of the critical gKdV case via monotonicity \cite{FHR1}, we note that the integration is two dimensional in the definition \eqref{E:I} (but the function $\psi$ is defined only in one variable $x_1$). We next study the behavior of $I$ in time and our almost monotonicity result is the following.
\begin{lemma}[Almost Monotonicity]\label{AM}
Let $M\geq 4$ fixed and assume that $y_1(t)$ is an increasing function satisfying $y_1(t_0)-y_1(t)\geq \frac{3}{4}(t_0-t)$ for every $t_0, t\geq 0$ with $t\in [0,t_0]$. Then there exist $\alpha_0>0$ and $\theta=\theta(M,p)>0$ such that if $u\in C(\R, H^1(\R^2))$ verify \eqref{u-Q} with $\alpha<\alpha_0$, then for all $x_0>0$, $t_0, t\geq 0$ with $t\in [0,t_0]$, we have
$$
I_{x_0,t_0}(t_0)-I_{x_0,t_0}(t)\leq \theta e^{-\frac{x_0}{M}}.
$$
\end{lemma}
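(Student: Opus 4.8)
The plan is to establish the inequality by differentiating $I_{x_0,t_0}$ in time, exhibiting a favorable (negative) leading part, and showing that everything else is either absorbed or exponentially small in $x_0$. Writing $\xi = x_1 - y_1(t_0) + \frac12(t_0-t) - x_0$ for the argument of $\psi$, one has $\partial_t \psi(\xi) = -\frac12 \psi'(\xi)$ and $\partial_{x_1}\psi(\xi) = \psi'(\xi)$. First I would substitute $u_t = -\partial_{x_1}(\Delta u + u^p)$ from \eqref{gZK} into $\frac{d}{dt}I_{x_0,t_0}(t) = 2\int u u_t\, \psi - \frac12\int u^2\psi'$ and integrate by parts repeatedly (in $x_1$ for the $u_{x_1x_1x_1}$ and $(u^p)_{x_1}$ terms, in $x_2$ then $x_1$ for $u_{x_1x_2x_2}$, using that $\psi$ depends on $x_1$ only). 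This should yield the identity
\begin{equation*}
\frac{d}{dt}I_{x_0,t_0}(t) = -3\int u_{x_1}^2\psi' - \int u_{x_2}^2\psi' - \frac12\int u^2\psi' + \int u^2\psi''' + \frac{2p}{p+1}\int u^{p+1}\psi',
\end{equation*}
in which the first three terms are good, the fourth is a controllable error, and the last is the dangerous nonlinear term.

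Next I would dispose of the benign terms. By property (5) of $\psi$, $|\psi'''|\le \frac1{M^2}\psi'\le\frac1{16}\psi'$, so $\big|\int u^2\psi'''\big|\le\frac1{16}\int u^2\psi'$ is dominated by $\frac12\int u^2\psi'$, leaving, up to the nonlinear term, a bound of the form $\frac{d}{dt}I_{x_0,t_0}(t)\le -\frac{c_0}{2}\big(\int|\nabla u|^2\psi' + \int u^2\psi'\big) + \frac{2p}{p+1}\int u^{p+1}\psi'$. Before estimating the nonlinearity it is essential to record the geometry dictated by the hypothesis: the weight $\psi'$ concentrates at $x_1 = m(t):=y_1(t_0)-\frac12(t_0-t)+x_0$, and $y_1(t_0)-y_1(t)\ge\frac34(t_0-t)$ gives $m(t)-y_1(t)\ge x_0 + \frac14(t_0-t)$. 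Thus $\psi'$ lives at distance at least $x_0$ to the right of the soliton center; writing $u = Q_y + \ep_y$ with $Q_y = Q(\cdot - y_1(t), \cdot - y_2(t))$ and $\|\ep\|_{H^1}\le \alpha$, the mass of $Q_y$ is exponentially small on the half-plane to the right of the soliton.

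The crux, and the main obstacle, is the nonlinear term $\frac{2p}{p+1}\int u^{p+1}\psi'$. I would split $\R^2$ along $x_1$ near the soliton, at $x_1 = y_1(t) + (1-\lambda)\big(m(t)-y_1(t)\big)$ for a $\lambda$ close to $1$. To the left (which contains the soliton) the argument of $\psi$ satisfies $\xi\le -\lambda\big(m(t)-y_1(t)\big)$, so $\psi'$ is exponentially small there and $\int_{\mathrm{left}}u^{p+1}\psi'\lesssim e^{-\lambda (m(t)-y_1(t))/M}\|u\|_{L^{p+1}}^{p+1}$ is exponentially small because $\|u\|_{H^1}$ is bounded in the tube. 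To the right, $Q_y$ is exponentially small (its rate $1$ decay even beats the sliver of the gap left by $\lambda$), so there $u\approx\ep_y$ with small \emph{local} $L^2$ mass, $\int_{\mathrm{right}}u^2\psi'\lesssim \alpha^2 + e^{-c x_0}$. The genuine difficulty — absent in the one-dimensional gKdV case of \cite{Co10, FHR1} — is that in two dimensions $H^1$ does not embed into $L^\infty$, so one cannot bound $u^{p-1}$ pointwise on the support of $\psi'$. Instead I would use a localized Gagliardo--Nirenberg inequality \eqref{GN}: on the core of $\psi'$ the weight is comparable to the constant $\tfrac1{\pi M}$, so applying \eqref{GN} with $q=p+1$ to $\ep_y$ there gives $\int_{\mathrm{right}}\ep_y^{p+1}\psi'\lesssim \|\ep\|_{H^1}^{p-1}\int \ep_y^2\psi' \lesssim \alpha^{p-1}\int u^2\psi'$, which for $\alpha<\alpha_0$ small is absorbed into the good quadratic term; what remains is again exponentially small.

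Combining these estimates yields, for $\alpha$ small, a pointwise-in-time differential inequality $\frac{d}{dt}I_{x_0,t_0}(t)\le C e^{-c(x_0 + \frac14(t_0-t))}$, the good terms having been consumed in the absorption. The final step is to integrate over $[t,t_0]$: thanks to the extra gap $\frac14(t_0-s)$, the time integral $\int_t^{t_0}e^{-c(x_0+\frac14(t_0-s))}\,ds \le C e^{-c x_0}$ converges uniformly in $t_0$, and after choosing the split parameter $\lambda$ so that the slower of the two rates (that of $\psi'$, namely $1/M$) governs and absorbing all constants into $\theta=\theta(M,p)$, one obtains $I_{x_0,t_0}(t_0)-I_{x_0,t_0}(t)\le \theta e^{-x_0/M}$. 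I expect the localized Gagliardo--Nirenberg step — producing a genuinely small gain in two dimensions while keeping the $\psi'$-weight matched to the good terms — to be the most delicate point of the argument.
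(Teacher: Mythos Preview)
Your overall plan --- differentiate $I_{x_0,t_0}$, obtain the identity with the good negative terms, absorb $\psi'''$ via $|\psi'''|\le\frac1{16}\psi'$, control the nonlinear term, and integrate in $t$ using the extra gap $\frac14(t_0-t)$ --- is exactly the paper's proof. The only difference is in how $\frac{2p}{p+1}\int u^{p+1}\psi'$ is handled. You split the $x_1$-axis into a left region (where $\psi'$ is exponentially small) and a right region (where $Q_y$ is exponentially small, so $u\approx\ep_y$). The paper instead first writes $u^{p+1}=Q(\cdot-\vec y)\,u^p+(u-Q(\cdot-\vec y))u^p$: the second piece is bounded \emph{globally} by H\"older and a weighted Sobolev estimate $\|u\sqrt{\psi'}\|_q^2\lesssim\int(|\nabla u|^2+u^2)\psi'$, gaining the small factor $\|u-Q(\cdot-\vec y)\|_2\le\alpha$; only the first piece is then split spatially (in $|\vec x-\vec y(t)|\lessgtr R_0$, using both coordinates). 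This sidesteps the localized Gagliardo--Nirenberg step you flag as most delicate.

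On that step, your stated justification --- ``on the core of $\psi'$ the weight is comparable to $\tfrac1{\pi M}$'' --- does not yield the bound $\int_{\mathrm{right}}\ep_y^{p+1}\psi'\lesssim\|\ep\|_{H^1}^{p-1}\int\ep_y^2\psi'$: the right region is a half-plane on which $\psi'$ is \emph{not} comparable to a constant, and the unweighted $\|\ep\|_2^2$ you would get from ordinary Gagliardo--Nirenberg is not the weighted $\int\ep^2\psi'$ you need for absorption. What makes the argument work is instead the property $|\psi''|\le\frac1M\psi'$: setting $v=\ep_y\sqrt{\psi'}$ gives $\|v\|_{H^1}^2\lesssim\int(|\nabla\ep_y|^2+\ep_y^2)\psi'$, whence by H\"older and Sobolev
\[
\int\ep_y^{p+1}\psi'\le\|\ep_y\|_{L^{2(p-1)}}^{p-1}\|v\|_{L^4}^2\lesssim\|\ep\|_{H^1}^{p-1}\int(|\nabla\ep_y|^2+\ep_y^2)\psi'.
\]
Note the gradient term on the right is unavoidable; it is absorbed into $\int|\nabla u|^2\psi'$ after using $\ep_y=u-Q_y$ and the exponential smallness of $Q_y$ away from the soliton. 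With this correction your route goes through and is equivalent to the paper's; the paper's decomposition simply makes the smallness factor $\alpha$ appear without any spatial cutoff.
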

\begin{proof}
Using the equation and the fact that $ \left| \psi^{\prime\prime\prime}(x)\right|\leq \frac{1}{M^2}\psi^{\prime}(x)\leq \frac{1}{16}\psi^{\prime}(x)$, we deduce
\begin{align}\label{Ix0t0}
\nonumber
\frac{d}{dt} I_{x_0,t_0}(t) = &2 \int uu_t \psi - \frac{1}{2} \int u^2\psi^{\prime}\\
\nonumber
= & -\int \left(3u_{x_1}^2+u_{x_2}^2-\frac{2p}{p+1}u^{p+1}\right)\psi^{\prime}+\int u^2\psi^{\prime\prime\prime}-\frac{1}{2}\int u^2\psi^{\prime}\\
\leq & -\int \left(3u_{x_1}^2+u_{x_2}^2+\frac{1}{4}u^2\right)\psi^{\prime}+\frac{2p}{p+1}\int u^{p+1}\psi^{\prime}.
\end{align}

Now, we estimate the last term on the right hand side of the previous inequality. First, we write
\begin{align}\label{RHS}
\int u^{p+1}\psi^{\prime}=\int Q(\cdot-\vec{y}(t))u^p\psi^{\prime}+\int (u-Q(\cdot-\vec{y}(t)))u^p\psi^{\prime},
\end{align}
where $\vec{y}(t)=(y_1(t), y_2(t))$. To estimate the second term, we use the Sobolev embedding $H^1(\R^2)\hookrightarrow L^{q}(\R^2)$, for all $2\leq q<+\infty$, to get
\begin{align}\label{RHS1}
\nonumber
\int (u-Q(\cdot-\vec{y}(t)))u^p\psi^{\prime}\leq& \|(u-Q(\cdot-\vec{y}(t)))u^{p-2}\|_{4/3}\|u^2\psi^{\prime}\|_{4}\\
\nonumber
\leq & c \, \|u-Q(\cdot-\vec{y}(t))\|_{2}\|u\|^{p-2}_{4(p-2)}\|u\sqrt{\psi^{\prime}}\|^2_{8}\\
\leq & c \, \alpha \, \|Q\|^{p-2}_{H^1} \int (|\nabla u|^2+|u|^2)\psi^{\prime}.
\end{align}
For the first term on the right hand side of \eqref{RHS}, we divide the integration into two regions $|\vec{x}-\vec{y}(t)|\geq R_0$ and $|\vec{x}-\vec{y}(t)|< R_0$, where $R_0$ is a positive number to be chosen later. Therefore, since $|Q(\vec{x})|\leq c \, e^{-\delta|\vec{x}|}$, we obtain
\begin{align*}
\nonumber
\int_{|\vec{x}-\vec{y}(t)|\geq R_0} Q(\cdot-\vec{y}(t))u^p\psi^{\prime}\leq& ce^{-\delta R_0}\|u^{p-2}\|_{3}\|u\sqrt{\psi^{\prime}}\|^2_{3}\\
\leq & c e^{-\delta R_0}\|Q\|^{p-2}_{H^1} \int (|\nabla u|^2+|u|^2)\psi^{\prime}.
\end{align*}
Next, when $|\vec{x}-\vec{y}(t)|\leq R_0$, we have that
\begin{align*}
\nonumber
\left|x_1-y_1(t_0)+\frac{1}{2}(t_0-t)-x_0\right| \geq& (y_1(t_0)-y_1(t)+x_0)-\frac{1}{2}(t_0-t)-|x_1-y_1(t)|\\
\geq & \frac{1}{4}(t_0-t)+x_0-R_0,
\end{align*}
where in the first inequality we have used that $y_1(t)$ is increasing, $t_0\geq t$ and $x_0>0$ to compute the modulus of the first term, and in the second line we have used the assumption $y_1(t_0)-y_1(t)\geq \frac{3}{4}(t_0-t)$.

Since $\psi^{\prime}(z)\leq c \, e^{-\frac{|z|}{M}}$, we use again the Sobolev Embedding $H^1(\R^2)\hookrightarrow L^{q}(\R^2)$, for all $2\leq q<+\infty$, to deduce that
\begin{align}\label{RHS3}
\nonumber
\int_{|\vec{x}-\vec{y}(t)|\leq R_0} Q(\cdot-\vec{y}(t))u^{p}\psi^{\prime}\leq& c\|Q\|_{\infty}e^{\frac{R_0}{M}}e^{-\frac{\left(\frac{1}{4}(t_0-t)+x_0\right)}{M}}\|u\|^p_{H^1}\\
\leq & c\|Q\|_{\infty}\|Q\|^p_{H^1}e^{\frac{R_0}{M}}e^{-\frac{\left(\frac{1}{4}(t_0-t)+x_0\right)}{M}}.
\end{align}

Therefore, choosing $\alpha$ such that $c \, \alpha \, \|Q\|^{p-2}_{H^1}<\frac{p+1}{2p} \cdot \frac{1}{4}$ and $R_0$ such that $c \, e^{-\delta R_0}\|Q\|^{p-2}_{H^1}<\frac{p+1}{2p} \cdot \frac{1}{4}$, collecting \eqref{RHS1}-\eqref{RHS3}, we obtain
$$
\frac{2p}{p+1}\int u^{p+1}\psi^{\prime}\leq \frac{1}{8}\int (|\nabla u|^2+|u|^2)\psi^{\prime}+c\|Q\|_{\infty}\|Q\|^p_{H^1}e^{\frac{R_0}{M}}e^{-\frac{\left(\frac{1}{4}(t_0-t)+x_0\right)}{M}}.
$$
Inserting the previous estimate in \eqref{Ix0t0}, there exists $C>0$ such that
\begin{align*}
\nonumber
\frac{d}{dt} I_{x_0,t_0}(t) \leq &-\int \left(\frac{3}{2}u_{x_1}^2+\frac{1}{2}u_{x_2}^2+\frac{1}{8}u^2\right)\psi^{\prime}+c \, e^{-\frac{x_0}{M}}\cdot e^{-\frac{1}{4M}(t_0-t)}\\
\nonumber
\leq & c \, e^{-\frac{x_0}{M}}\cdot e^{-\frac{1}{4M}(t_0-t)}
\end{align*}
Finally, integrating in time on $[t,t_0]$, we obtain the desired inequality for some $\theta=\theta(M,p)>0$.
\end{proof}

The next lemma is the main tool to obtain the upper bound for $|J(t)|$ independent of $t\geq 0$.
\begin{lemma}\label{AM2}
Let $y_1(t)$ satisfying the assumptions of Lemma \ref{AM}. Also assume that $y_1(t)\geq \frac{1}{2}t$ and $y_2(t)=0$ for all $t\geq 0$. Let $u\in C(\R, H^1(\R^2))$ a solution of the gZK equation \eqref{gZK} satisfying \eqref{u-Q} with $\alpha<\alpha_0$ (with $\alpha_0$ given in Lemma \ref{AM}) and with initial data $u_0$ verifying $\int |u_0({x_1,x_2})|^2 \, dx_2 \leq c \, e^{-\delta|{x_1}|}$ for some $c>0$ and $\delta>0$. Fix $M\geq \max\{4, \frac{2}{\delta}\}$. Then there exists $C=C(M,\delta, p)>0$ such that for all $t\geq 0$ and $x_0>0$
$$
\int_{\R}\int_{x_1>x_0}u^2(t, x_1+y_1(t), x_2) \, dx_1dx_2 \leq C \, e^{-\frac{x_0}{M}}.
$$
\end{lemma}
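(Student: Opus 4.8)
The plan is to derive everything from the Almost Monotonicity Lemma \ref{AM}, which already contains the analytic heart of the matter; what remains is bookkeeping to turn that inequality into a bound that is uniform in $t$. First I would rewrite the quantity of interest in terms of the functional $I_{x_0,t_0}$. Taking $t_0=t$ in \eqref{E:I} and changing variables $w_1=x_1-y_1(t)$ gives
\[
I_{x_0,t}(t)=\int u^2(t,w_1+y_1(t),x_2)\,\psi(w_1-x_0)\,dw_1\,dx_2,
\]
which is finite since $u(t)\in H^1(\R^2)$ and $\psi$ is bounded. Because $\psi$ is increasing with $\psi(0)=\tfrac12$, on the region $w_1>x_0$ one has $\psi(w_1-x_0)\geq \tfrac12$, whence
\[
\int_{\R}\int_{x_1>x_0}u^2(t,x_1+y_1(t),x_2)\,dx_1\,dx_2\leq 2\,I_{x_0,t}(t),
\]
so it suffices to bound $I_{x_0,t}(t)$.

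Next I would apply Lemma \ref{AM} with its final time $t_0$ set equal to the present time $t$ and its running time set equal to $0$ (this is legitimate since $0\in[0,t]$ and $y_1$ satisfies the hypotheses of Lemma \ref{AM}), which yields
\[
I_{x_0,t}(t)\leq I_{x_0,t}(0)+\theta\,e^{-x_0/M}.
\]
The only nontrivial remaining point is to estimate the initial contribution $I_{x_0,t}(0)$ uniformly in $t$. By definition,
\[
I_{x_0,t}(0)=\int u_0^2(x_1,x_2)\,\psi\!\left(x_1-y_1(t)+\tfrac12 t-x_0\right)dx_1\,dx_2 .
\]
Here the standing hypothesis $y_1(t)\geq \tfrac12 t$ is essential: it forces $-y_1(t)+\tfrac12 t\leq 0$, so, $\psi$ being increasing, the argument may be replaced by the $t$-independent quantity $x_1-x_0$. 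Combining this with the elementary global bound $\psi(z)\leq \tfrac{2}{\pi}e^{z/M}$ (from $\arctan s\leq s$ for $s\geq 0$) and the initial-data decay $\int u_0^2\,dx_2\leq c\,e^{-\delta|x_1|}$, I would obtain
\[
I_{x_0,t}(0)\leq \frac{2c}{\pi}\,e^{-x_0/M}\int_{\R}e^{-\delta|x_1|}\,e^{x_1/M}\,dx_1 .
\]

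The convergence of the last integral is exactly where the choice $M\geq\max\{4,\tfrac{2}{\delta}\}$ enters: for $x_1\geq 0$ the integrand equals $e^{-(\delta-1/M)x_1}$, which is integrable precisely because $M\geq \tfrac{2}{\delta}>\tfrac{1}{\delta}$ gives $\delta-\tfrac1M\geq \tfrac{\delta}{2}>0$, while the $x_1<0$ part is harmless. This produces a finite constant depending only on $M,\delta$, so $I_{x_0,t}(0)\lesssim_{M,\delta}e^{-x_0/M}$ uniformly in $t$. Assembling the three estimates gives $I_{x_0,t}(t)\leq C(M,\delta,p)\,e^{-x_0/M}$, and the factor $2$ from the first reduction is absorbed into the final constant. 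The main obstacle is conceptual rather than computational: one must realize that specializing Lemma \ref{AM} to $t_0=t$ and stepping back to time $0$ converts ``almost monotonicity'' into a uniform-in-$t$ spatial decay estimate, and that the two hypotheses $y_1(t)\geq\tfrac12 t$ and $M\geq\tfrac{2}{\delta}$ are precisely what force the initial contribution to decay at the rate $e^{-x_0/M}$ rather than merely remain bounded.
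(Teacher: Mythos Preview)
Your proof is correct and follows essentially the same approach as the paper: specialize Lemma~\ref{AM} to $t_0=t$ and running time $0$, bound the target integral below by $2I_{x_0,t}(t)$ via $\psi(0)=\tfrac12$, then control $I_{x_0,t}(0)$ by using $y_1(t)\geq \tfrac12 t$ together with the monotonicity of $\psi$, the pointwise bound $\psi(z)\lesssim e^{z/M}$, the decay of $u_0$, and the condition $M\geq \tfrac{2}{\delta}$ to make the resulting integral converge. The only cosmetic difference is that you split the $x_1$-integral into $x_1\geq 0$ and $x_1<0$, whereas the paper bounds both halves at once by $e^{-(\delta-1/M)|x_1|}$; the content is the same.
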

\begin{proof}
From Lemma \ref{AM} with $t=0$ and replacing $t_0$ by $t$, we deduce for all $t\geq 0$
$$
I_{x_0,t}(t)-I_{x_0,t}(0)\leq \theta e^{-\frac{x_0}{M}}.
$$
This is equivalent to
$$
\int u^2(t, x_1, x_2)\psi(x_1-y_1(t)-x_0)dx_1dx_2\leq \int u_0^2(x_1,x_2)\psi(x_1-y_1(t)+\frac{1}{2}t-x_0)dx_1dx_2+\theta e^{-\frac{x_0}{M}}.
$$
On the other hand,
\begin{align*}
\int u^2(t, x_1,x_2)\psi(x_1-y_1(t)-x_0)dx_1dx_2=&\int u^2(t, x_1+y_1(t), x_2)\psi(x_1-x_0)dx_1dx_2\\
 \geq & \frac{1}{2}\int_{\R}\int_{x_1>x_0}u^2(t, x_1+y_1(t), x_2)dx_1dx_2,
\end{align*}
where in the last inequality we have used the fact that $\psi$ is increasing and $\psi(0)=1/2$.

Now, since $-y_1(t)+\frac{1}{2}t\leq 0$ and $\psi$ is increasing, we have
$$
\int u_0^2(x_1,x_2)\psi(x_1-y_1(t)+\frac{1}{2}t-x_0)dx_1dx_2\leq \int u_0^2(x_1,x_2)\psi(x_1-x_0)dx_1dx_2.
$$
The assumptions $\int |u_0({x_1,x_2})|^2 \, dx_2 \leq c \, e^{-\delta|{x_1}|}$ and $\psi(x_1)\leq ce^{\frac{x_1}{M}}$ yield
\begin{align*}
\int u_0^2(x_1,x_2)\psi(x_1-x_0)dx_1dx_2
\leq & c \, \int e^{-\delta|{x_1}|} \, e^{\frac{x_1-x_0}{M}}dx_1\\
\leq & c \, e^{-\frac{x_0}{M}}\int e^{-\left(\delta-\frac{1}{M}\right)|{x_1}|}dx_1\\
\leq & \, \bar{C}(M,\delta) \, e^{-\frac{x_0}{M}},
\end{align*}
where in the last inequality we have used the fact that
$$
\delta -\frac{1}{M}\geq \frac{\delta}2 \Longleftrightarrow M \geq \frac{2}{\delta}.
$$
Collecting the above estimate, we obtain the desired inequality.
\end{proof}

Next, we compute the derivative of $J(t)$.
\begin{lemma}
Suppose that $\ep(t)\in H^1(\R^2)$, for all $t\geq 0$, and $y_1(t)$, $y_2(t)$ are two $C^1$ functions.  Then the function $t\mapsto J(t)$ is $C^1$ and
\begin{equation}\label{J'}
\frac{d}{dt}J=\beta \lambda_0 \int \ep\chi_0 +K(\ep),
\end{equation}
where, setting
\begin{equation}\label{E:beta}
\beta=-\frac{\int Q\Lambda Q}{\int Q\chi_0}
\end{equation}
and recalling the definition \eqref{R(ep)}, we have
\begin{equation}\label{K}
K(\ep)=\int Q\ep - (y_1'-1)\int \ep (\Lambda Q+\beta \chi_0)-y_2'\int \ep F_{x_2}-y_2'\int Q F_{x_2}-\int R(\ep)F.
\end{equation}
\end{lemma}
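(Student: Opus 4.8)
The plan is to differentiate $J(t)=\int \ep(t)F$ under the integral sign, substitute the evolution equation \eqref{ep1-order1} for $\ep$, and then integrate by parts so that every surviving term is paired against $\ep$ (or $Q$) directly rather than against a derivative of $\ep$. Writing $\ep_t = (L\ep)_{x_1} + (y_1'-1)(Q+\ep)_{x_1} + y_2'(Q+\ep)_{x_2} - R(\ep)$ and testing against $F$, I would treat the four resulting integrals separately. The crucial structural input is that $F$ was designed precisely so that $F_{x_1}=\Lambda Q+\beta\chi_0$, which feeds directly into the algebraic identities of Lemma~\ref{L-prop2}.

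For the dispersive term $\int(L\ep)_{x_1}F$, one integration by parts in $x_1$ gives $-\int(L\ep)(\Lambda Q+\beta\chi_0)$, and then self-adjointness of $L$ moves the operator onto the smooth, exponentially decaying function $\Lambda Q+\beta\chi_0$. Using $L(\Lambda Q)=-Q$ from Lemma~\ref{L-prop2} together with $L\chi_0=-\lambda_0\chi_0$, this term becomes $\int Q\ep+\beta\lambda_0\int\ep\chi_0$, which produces both the leading term in \eqref{J'} and the $\int Q\ep$ contribution to $K$. For the transport term, integrating $\int(Q+\ep)_{x_1}F$ by parts and again using $F_{x_1}=\Lambda Q+\beta\chi_0$ yields $-(y_1'-1)\int(Q+\ep)(\Lambda Q+\beta\chi_0)$; here the choice $\beta=-\int Q\Lambda Q/\int Q\chi_0$ is exactly what forces the constant part $\int Q\Lambda Q+\beta\int Q\chi_0$ to vanish, leaving only $-(y_1'-1)\int\ep(\Lambda Q+\beta\chi_0)$. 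The $y_2'$ term is handled by one integration by parts in $x_2$, splitting into $-y_2'\int Q F_{x_2}-y_2'\int\ep F_{x_2}$, and the nonlinear remainder contributes $-\int R(\ep)F$ unchanged. Collecting these reproduces \eqref{K} exactly.

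The main obstacle is rigor rather than algebra: at the stated regularity $\ep(t)\in H^1$, the expression $(L\ep)_{x_1}$ lives in a negative Sobolev space and the integrations by parts are not literally justified, while $F$ is only bounded (it does \emph{not} decay as $x_1\to+\infty$), so vanishing of the boundary terms is not automatic. I would therefore first carry out the computation for a regularized solution with $\ep(t)\in H^{3+}$, which is available from the well-posedness theory of de Bouard discussed in Section~\ref{S-5}; there every manipulation is classical, the boundary term as $x_1\to-\infty$ vanishes because $|F|=o(e^{\delta x_1/2})$ and the one as $x_1\to+\infty$ vanishes because $L\ep\to0$. To pass to the $H^1$ level I would establish the identity in integrated form, $J(t)-J(s)=\int_s^t\big(\beta\lambda_0\int\ep\chi_0+K(\ep)\big)\,d\tau$, and invoke continuity of the integrand in $t$: the linear terms are continuous because they pair $\ep\in C(\R,H^1)$ against fixed exponentially decaying functions, and the remainder term is controlled using the pointwise bound \eqref{error} on $R(\ep)$ together with the boundedness of $F$ and the Gagliardo-Nirenberg inequality \eqref{GN}. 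Since the integrand is then continuous in $\tau$, the fundamental theorem of calculus upgrades $J$ to a $C^1$ function with derivative \eqref{J'}.
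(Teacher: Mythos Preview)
Your proposal is correct and follows essentially the same approach as the paper: differentiate under the integral, substitute \eqref{ep1-order1}, integrate by parts using $F_{x_1}=\Lambda Q+\beta\chi_0$ and the self-adjointness of $L$, and invoke $L(\Lambda Q)=-Q$, $L\chi_0=-\lambda_0\chi_0$ together with the choice of $\beta$ to cancel the $(y_1'-1)\int Q(\Lambda Q+\beta\chi_0)$ term. Your added regularization-then-density argument actually supplies rigor that the paper's own proof leaves implicit.
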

\begin{remark}
Since $Q$ and $\chi_0$ are both positive functions, we have $\int Q \chi_0>0$, and thus, $\beta$ in \eqref{E:beta} is well-defined.
By Lemma \ref{L-prop2}, part (iii), we deduce that $\ds \beta=-\frac{\int Q\Lambda Q}{\int Q\chi_0}>0$, if $p>3$. (Note that in the critical case, $p=3$, such a constant $\beta$ would be zero, since $\int Q \Lambda Q = 0$.)
\end{remark}
\begin{proof}
From the equation for $\ep$ \eqref{ep1-order1}, we have
\begin{align*}
\frac{d}{dt}J=& \int \ep_tF\\
=& \int (L \ep)_{x_1}F +(y'_1(t)-1)\int(Q+\ep)_{x_1}F + y'_2(t) \int (Q +\ep)_{x_2}F -\int  R(\ep)F\\
=&-\int \ep L(F_{x_1}) -(y'_1(t)-1)\int(Q+\ep)F_{x_1} - y'_2(t) \int (Q +\ep)F_{x_2} -\int  R(\ep)F\\
=&-\int \ep L(\Lambda Q+\beta \chi_0) -(y'_1(t)-1)\int(Q+\ep)(\Lambda Q+\beta \chi_0) - y'_2(t) \int (Q +\ep)F_{x_2} -\int  R(\ep)F.
\end{align*}
Using that $L(\Lambda Q)=-Q$ and $L(\chi_0)=-\lambda_0 \chi_0$ (see Theorem \ref{L-prop} and Lemma \ref{L-prop2}), we obtain
\begin{align*}
\frac{d}{dt}J
=&\beta \lambda_0\int \ep \chi_0 -(y'_1(t)-1)\int Q(\Lambda Q+\beta \chi_0) +\int \ep Q\\
&-(y'_1(t)-1)\int \ep (\Lambda Q+\beta \chi_0) - y'_2(t) \int (Q +\ep)F_{x_2} -\int  R(\ep)F.\\
\end{align*}
Setting $\beta=-\frac{\int Q\Lambda Q}{\int Q\chi_0}>0$, the second term in the last expression is zero, and then relation \eqref{J'} holds with $K(\ep)$ given by \eqref{K}.
\end{proof}

\section{$H^1$-instability of $Q$ for the supercritical gZK}\label{S-6}
In this section we prove Theorem \ref{Theo-Inst}. For $n\geq 1$ let
\begin{equation}\label{u0n}
u_{0,n}(\vec{x})=\lambda_nQ(\lambda_n \vec{x}), \quad \textrm{where} \quad \lambda_n=1+\frac1n \quad \textrm{and} \quad  \vec{x}=(x_1,x_2).
\end{equation}
The following proposition exhibits some properties of the sequence $\{u_{0,n}\}$ with respect to $Q$ that will be useful later.
\begin{proposition}\label{u0n-prop}
Let $u_{0,n}$ be given by \eqref{u0n}, then for every $n\in \mathbb{N}$
\begin{equation}\label{u0n-rel}
\|u_{0,n}\|_2=\|Q\|_2 \quad \textrm{and} \quad  E[u_{0,n}]<E[Q].
\end{equation}
Moreover,
\begin{equation}\label{u0n-lim}
\displaystyle \|u_{0,n}-Q\|_{H^1}\rightarrow 0, \quad \textrm{as} \quad n\rightarrow +\infty.
\end{equation}
\end{proposition}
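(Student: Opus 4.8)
The plan is to establish the three assertions separately. The mass identity $\|u_{0,n}\|_2 = \|Q\|_2$ is immediate: the prefactor $\lambda_n$ in $u_{0,n}(\vec{x}) = \lambda_n Q(\lambda_n\vec{x})$ is precisely the $L^2$-invariant (mass-critical) dilation in dimension two, so the change of variables $\vec{y} = \lambda_n\vec{x}$ gives
\[
\|u_{0,n}\|_2^2 = \lambda_n^2\int_{\R^2}Q(\lambda_n\vec{x})^2\,d\vec{x} = \int_{\R^2}Q(\vec{y})^2\,d\vec{y} = \|Q\|_2^2.
\]
No further computation is needed here.

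The substantive point is the strict energy inequality. I would introduce the scalar function $e(\lambda) := E[\lambda Q(\lambda\,\cdot)]$, so that $E[u_{0,n}] = e(\lambda_n)$ and $E[Q] = e(1)$. Scaling the two terms of the energy yields
\[
e(\lambda) = \tfrac12\,\lambda^2\int|\nabla Q|^2 - \tfrac{1}{p+1}\,\lambda^{p-1}\int Q^{p+1} =: \lambda^2 A - \lambda^{p-1}B,
\]
with $A,B>0$. Two facts drive the argument. First, $e'(1)=0$: since the dilation $\lambda\mapsto\lambda Q(\lambda\cdot)$ preserves the mass $M$, one has $e'(1) = \frac{d}{d\lambda}W[\lambda Q(\lambda\cdot)]\big|_{\lambda=1}$, and the expansion \eqref{FLH} shows that $W$ has vanishing first variation at $Q$; equivalently, this is the Pohozaev relation $\int|\nabla Q|^2 = \frac{p-1}{p+1}\int Q^{p+1}$, i.e. $2A=(p-1)B$. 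Second, substituting this relation,
\[
e'(\lambda) = 2\lambda A - (p-1)\lambda^{p-2}B = (p-1)B\,\lambda\,(1-\lambda^{p-3}).
\]
Because $p>3$, for $\lambda>1$ we have $\lambda^{p-3}>1$, hence $e'(\lambda)<0$; thus $e$ is strictly decreasing on $[1,\infty)$, and since $\lambda_n>1$ we conclude $E[u_{0,n}]=e(\lambda_n)<e(1)=E[Q]$ for every $n$.

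For the $H^1$ convergence I would invoke the strong continuity of the dilation group on $L^2$. As $\lambda_n\to 1$, the standard fact that $\lambda\mapsto\lambda Q(\lambda\cdot)$ is continuous into $L^2$ (proved by approximating $Q$ by $C_c^\infty$ functions and using dominated convergence, justified by the exponential decay \eqref{prop-Q}) gives $\|u_{0,n}-Q\|_2\to 0$. For the gradient, writing $\nabla u_{0,n} = \lambda_n^2(\nabla Q)(\lambda_n\cdot) = \lambda_n\big[\lambda_n(\nabla Q)(\lambda_n\cdot)\big]$, I estimate
\[
\|\nabla u_{0,n}-\nabla Q\|_2 \leq \lambda_n\,\big\|\lambda_n(\nabla Q)(\lambda_n\cdot)-\nabla Q\big\|_2 + |\lambda_n-1|\,\|\nabla Q\|_2,
\]
and both terms tend to $0$ by the same continuity applied componentwise to $\nabla Q\in L^2$. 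Together with the $L^2$ bound this yields \eqref{u0n-lim}.

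The only genuinely delicate step is the sign of $E[u_{0,n}]-E[Q]$: scaling alone leaves it undetermined, and one must pin down the ratio $A/B = \frac{p-1}{2}$ through the virial/Pohozaev identity before the monotonicity of $e$ can be read off. Once that is in hand, the strict inequality rests entirely on the supercriticality $p>3$ --- in the critical case $p=3$ one gets $e'(\lambda)\equiv 0$, so $e\equiv E[Q]$ and the strict inequality fails, in agreement with the earlier observation that the constant $\beta$ vanishes precisely when $p=3$.
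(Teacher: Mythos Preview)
Your proof is correct. The mass identity and the $H^1$ convergence match the paper's reasoning in spirit (the paper applies dominated convergence directly, using the pointwise limit and the exponential decay of $Q$ and $\nabla Q$; you phrase it as continuity of the dilation group on $L^2$, which comes to the same thing).

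The genuine difference is in the energy inequality. The paper computes $E[u_{0,n}]-E[Q]$ explicitly, substitutes the Pohozaev identity $\int Q^{p+1}=\tfrac{p+1}{p-1}\int|\nabla Q|^2$, then expands $(1+\tfrac1n)^2-1$ and $(1+\tfrac1n)^{p-1}-1$ by the binomial theorem and checks that the resulting $1/n^2$ coefficient is negative (and the higher-order terms have the right sign) for $p>3$. Your route --- introducing $e(\lambda)=\lambda^2A-\lambda^{p-1}B$, using Pohozaev to pin down $2A=(p-1)B$ so that $e'(1)=0$, and then reading off $e'(\lambda)=(p-1)B\,\lambda(1-\lambda^{p-3})<0$ for $\lambda>1$ --- is cleaner: it avoids the combinatorics entirely, makes the role of supercriticality $p>3$ completely transparent, and works verbatim for non-integer exponents, whereas the paper's binomial expansion, as written, presumes $p\in\mathbb{N}$. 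Both approaches rest on the same Pohozaev input; yours packages it more efficiently.
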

\begin{proof}
A simple rescaling shows
$$
\int |u_{0,n}(\vec{x})|^2d\vec{x}=\int \lambda_n^2|Q(\lambda_n\vec{x})|^2d\vec{x}=\int |Q(\vec{x})|^2d\vec{x},
$$
which proves the equality in \eqref{u0n-rel}. Moreover, from $u_{0,n}(\vec{x})\rightarrow Q(\vec{x})$ for all $x\in \R^2$ and the exponential decay of $Q$ \eqref{prop-Q}, the limit in \eqref{u0n-lim} is true by dominated convergence theorem.

Next, we turn to the energy inequality in \eqref{u0n-rel}. Indeed,
\begin{align*}
E[u_{0,n}]=&\frac{1}{2}\int |\nabla u_{0,n}(\vec{x})|^2d\vec{x}-\frac{1}{p+1}\int |u_{0,n}(\vec{x})|^{p+1}d\vec{x}\\
=&\frac{1}{2}\int \lambda^4_n|\nabla Q(\lambda_n\vec{x})|^2d\vec{x}-\frac{1}{p+1}\int \lambda^{p+1}_n |Q(\lambda_n\vec{x})|^{p+1}d\vec{x}\\
=&\frac{\lambda^2_n}{2}\int |\nabla Q(\vec{x})|^2d\vec{x}-\frac{\lambda^{p-1}_n}{p+1}\int  |Q(\vec{x})|^{p+1}d\vec{x}.
\end{align*}
Therefore, using the Pohozaev identity
$$
\int |Q|^{p+1}=\frac{p+1}{p-1}\int |\nabla Q|^2
$$
and the definition of $\lambda_n=1+\frac{1}{n}$, we obtain
\begin{align*}
E[u_{0,n}]-E[Q]=&\left[\frac{p-1}{2}(\lambda_n^2-1)-(\lambda_n^{p-1}-1)\right]\frac{1}{p+1} \int |Q|^{p+1}d\vec{x}\\
=&\left[\frac{p-1}{2}\left(\frac{2}{n}+\frac{1}{n^2}\right)-\frac{p-1}{n} -\binom{p-1}{2}\frac{1}{n^2}-\sum_{k=3}^{p-1}\binom{p-1}{k}\frac{1}{n^k}\right] \frac{1}{p+1}\int |Q|^{p+1}d\vec{x}\\
=&\left[\left[\frac{p-1}{2}-\binom{p-1}{2}\right]\frac{1}{n^2}-\sum_{k=3}^{p-1}\binom{p-1}{k}\frac{1}{n^k}\right] \frac{1}{p+1}\int |Q|^{p+1}d\vec{x}.\\
\end{align*}
Since for $p>3$ we have
$$
\binom{p-1}{2}=\frac{(p-1)!}{(p-3)!2!}=\frac{(p-1)(p-2)}{2}>\frac{p-1}{2},
$$
we deduce the desired inequality.
\end{proof}

Now, assume by contradiction that $Q$ is stable. Then for every $\alpha>0$ there exists $n(\alpha)\in \mathbb{N}$ such that for every $t\geq 0$ we have
$$
u_{n(\alpha)}(t)\in U_{\alpha},
$$
where $u_{n(\alpha)}$ is the solution with initial data $u_{0,n(\alpha)}$. Since $u_{0,n(\alpha)}(\vec{x})=\lambda_{n(\alpha)}Q(\lambda_{n(\alpha)} \vec{x})$ is cylindrically symmetric, so will be $u_{n(\alpha)}(t)$ for all $t\geq 0$, as the equation is invariant under rotation in $\R_{x_1}$. Select $\alpha_0<\alpha_2<\alpha_1$, where $\alpha_1>0$ is given by Proposition \ref{ModThI} and $\alpha_2>0$ by Lemma \ref{Lemma-param}. To simplify the notation, we omit the index $n(\alpha_0)$ from now on. Definition \ref{eps} provides a function
\begin{equation}\label{ept}
\ep(t)=\ep_{({y_1}(t), y_2(t))}=u( x_1 +y_1(t), x_2+y_2(t)) - Q(x_1,x_2),
\end{equation}
satisfying \eqref{ep-perp2}, and, from the second conclusion in Proposition \ref{ModThI}, we also have $y_2(t)=0$ for all $t\geq 0$. From \eqref{u0n} and \eqref{u0n-lim}, we deduce that $y_1(0)=0$.

Since $u(t)\in U_{\alpha_0}$, from Proposition \ref{ModThI} and Lemma \ref{Lemma-param} we get
\begin{equation}\label{Bound_ep}
\|\ep(t)\|_{H^1}\leq C_1 \alpha_0 \quad \textrm{and} \quad |y'_1(t)-1|\leq C_2C_1 \alpha_0,
\end{equation}
so taking $\alpha_0<\{(2C_1)^{-1}, (4C_1C_2)^{-1}\}$, we obtain
\begin{equation*}
\|\ep(t)\|_{H^1}\leq 1 \quad \textrm{and} \quad \frac34\leq y'_1(t)\leq \frac54 \quad \textrm{for all} \quad t\geq 0.
\end{equation*}

The last inequality implies that $y_1(t)$ is increasing and by the Mean Value Theorem
$$
y_1(t_0)-y_1(t)\geq \frac{3}{4}(t_0-t),
$$
for every $t_0, t\geq 0$ with $t\in [0,t_0]$.
Also, recalling $y_1(0)=0$, another application of the Mean Value Theorem yields
$$
y_1(t)\geq \frac{1}{2}t,
$$
for all $t\geq 0$. Finally, using the two facts that $Q(\vec{x}) \leq c \, e^{-{\delta}|\vec{x}|}$ for some $c, {\delta} >0$, and $\lambda_n\geq 1$ for all $n\in \mathbb{N}$ by assumption \eqref{u0n}, we obtain
$$
|u_0(\vec{x})|\leq 2c\, e^{-{\delta}|\vec{x}|}.
$$

Now, from Lemma \ref{AM2}, we can deduce the following $L^2$ exponential decay on the right for $\ep(t)$.

\begin{corollary}\label{AM3}
Let $M\geq \max\{4, \frac{1}{\delta}\}$. If $\alpha_0>0$ is sufficiently small, there exists $C=C(M,\delta, p)>0$ such that for every $t\geq 0$ and $x_0>0$
$$
\int_{\R}\int_{x_1>x_0}\ep^2(t, x_1, x_2)dx_1dx_2\leq Ce^{-\frac{x_0}{2M}}.
$$
\end{corollary}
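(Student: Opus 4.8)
The plan is to deduce the corollary directly from Lemma \ref{AM2} by comparing the shifted solution with $Q$ and controlling the tail of $Q$ by hand. Since $y_2(t)=0$ for all $t\geq 0$, the decomposition \eqref{ept} reads $\ep(t,x_1,x_2)=u(t,x_1+y_1(t),x_2)-Q(x_1,x_2)$, so the elementary inequality $(a-b)^2\leq 2a^2+2b^2$ gives
\begin{equation*}
\ep^2(t,x_1,x_2)\leq 2\,u^2(t,x_1+y_1(t),x_2)+2\,Q^2(x_1,x_2).
\end{equation*}
Integrating over $\{x_1>x_0,\ x_2\in\R\}$ splits the quantity to be estimated into a ``solution part'' and a ``ground state part''.

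First I would handle the solution part. All the hypotheses of Lemma \ref{AM2} have been arranged in the discussion preceding the corollary: $y_1$ is increasing with $y_1(t_0)-y_1(t)\geq\frac34(t_0-t)$ and $y_1(t)\geq\frac12 t$, $y_2\equiv 0$, the tube condition \eqref{u-Q} holds with $\alpha<\alpha_0$, and the pointwise bound $|u_0(\vec{x})|\leq 2c\,e^{-\delta|\vec{x}|}$ yields, after integrating in $x_2$ (using $|\vec{x}|\geq\frac{1}{\sqrt2}(|x_1|+|x_2|)$), a bound $\int |u_0(x_1,x_2)|^2\,dx_2\leq c'\,e^{-\delta'|x_1|}$ with $\delta'$ comparable to $\delta$. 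Hence Lemma \ref{AM2} applies and bounds the solution part by $2C\,e^{-x_0/M}\leq 2C\,e^{-x_0/(2M)}$.

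Next I would bound the ground state part directly. From \eqref{prop-Q} we have $Q^2(\vec{x})\leq c^2 e^{-2\delta|\vec{x}|}\leq c^2 e^{-\sqrt2\,\delta(|x_1|+|x_2|)}$, whence
\begin{equation*}
\int_{\R}\int_{x_1>x_0}Q^2\,dx_1dx_2\leq c^2\Big(\int_{\R}e^{-\sqrt2\,\delta|x_2|}\,dx_2\Big)\int_{x_0}^{\infty}e^{-\sqrt2\,\delta x_1}\,dx_1\leq C'\,e^{-\sqrt2\,\delta x_0}.
\end{equation*}
Since $M\geq\frac1\delta$ forces $\sqrt2\,\delta\geq\frac{1}{2M}$, this term is also dominated by $C'\,e^{-x_0/(2M)}$. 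Adding the two contributions and renaming constants then gives the asserted bound with rate $\frac{x_0}{2M}$.

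I do not expect a genuine obstacle here, as this is essentially a routine consequence of the monotonicity estimate. The only points requiring a little care are (i) checking that the pointwise decay of $u_0$ indeed produces the integrated decay hypothesis needed to invoke Lemma \ref{AM2}, and (ii) bookkeeping the exponential rates so that both the solution tail (rate $1/M$) and the $Q$-tail (rate $\sqrt2\,\delta\geq 1/(2M)$) fit under the single exponential $e^{-x_0/(2M)}$. The slightly weaker rate $\frac{1}{2M}$, compared with $\frac1M$ in Lemma \ref{AM2}, is precisely what provides the room to absorb the $Q$ contribution.
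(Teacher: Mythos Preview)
Your proposal is correct and follows essentially the same route as the paper: split $\ep=u(\cdot+y_1(t),\cdot)-Q$, bound the $u$-tail by Lemma~\ref{AM2} and the $Q$-tail by direct exponential decay, then combine via $(a-b)^2\le 2a^2+2b^2$. The only cosmetic differences are that the paper uses the cruder splitting $2|\vec{x}|\ge |x_1|+|x_2|$ (equivalent to your $|\vec{x}|\ge\tfrac{1}{\sqrt2}(|x_1|+|x_2|)$) and obtains both pieces already at rate $e^{-x_0/M}$, whereas you keep the slack to $e^{-x_0/(2M)}$; your explicit remark about verifying the integrated-in-$x_2$ decay hypothesis for $u_0$ is a point the paper leaves implicit.
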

\begin{proof}
Applying Lemma \ref{AM2}, for a fixed $M\geq \max\{4, \frac{1}{\delta}\}$,
there exists $C=C(M,\delta,p)>0$ such that for all $t\geq 0$ and $x_0>0$, we have
\begin{equation}\label{u^2-decay}
\int_{\R}\int_{x_1>x_0}u^2(t, x_1+y_1(t), x_2)dx_1dx_2\leq Ce^{-\frac{x_0}{M}}.
\end{equation}
From the definition of $\ep(t)$ (see \eqref{ept}), recalling that $y_2(t)=0$ for all $t\geq 0$, we have that
\begin{equation*}
\ep\left(t, x_1, x_2\right)=u(t,x_1+y_1(t), x_2)-Q\left(x_1,x_2\right).
\end{equation*}
Moreover, since $Q(\vec{x}) \leq c \, e^{-{\delta}|\vec{x}|}$, we obtain
\begin{align}\label{Q^2-decay}
\nonumber
\int_{\R}\int_{x_1>x_0}Q^2\left(x_1,x_2\right)dx_1dx_2\leq &c\int_{\R}\int_{x_1>x_0}e^{-2{\delta}|\vec{x}|}dx_1dx_2\\
\nonumber
\leq & c\left(\int_{\R}e^{-{\delta}|x_2|}dx_2\right)\left(\int_{x_1>x_0}e^{-{\delta}x_1}dx_1\right)\\
\leq & \frac{c}{\delta^2}e^{-\delta{x_0}}\leq \frac{c}{\delta^2}e^{-\frac{x_0}{M}},
\end{align}
where in the last inequality we have used that $M\geq \frac{1}{\delta}$.
Finally, collecting \eqref{u^2-decay}-\eqref{Q^2-decay}, we deduce the desired result.
\end{proof}

In the next proposition, we obtain an upper bound for $|J(t)|$ independent of $t\geq 0$ (improving the bound \eqref{Bound-J} previously obtained by de Bouard \cite{DeB}).
\begin{proposition}
If $\alpha_0>0$ is sufficiently small, then there exists a constant $M_0>0$ such that
\begin{equation}\label{Bound-J2}
\left|J(t)\right|\leq M_0 \quad \textrm{for all} \quad t\geq 0.
\end{equation}
\end{proposition}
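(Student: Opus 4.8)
The plan is to estimate $J(t)=\int_{\R^2}\ep(t)\,F\,dx_1dx_2$ (see \eqref{def-JA}) by splitting the domain according to the sign of $x_1$. The key point is that $F$ is bounded on $\R^2$ and decays exponentially as $x_1\to-\infty$ and as $|x_2|\to\infty$, but it does \emph{not} decay as $x_1\to+\infty$, where $F(x_1,x_2)\to\int_{-\infty}^{+\infty}(\Lambda Q+\beta\chi_0)(z,x_2)\,dz$; hence $F\notin L^2(\R^2)$ and one cannot simply combine Cauchy--Schwarz with the uniform bound $\|\ep(t)\|_2\le C_1\alpha_0$ coming from \eqref{Bound_ep}. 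From \eqref{F-decay} I would record the two pointwise bounds
\begin{equation*}
|F(x_1,x_2)|\le \tfrac{2c}{\delta}\,e^{\frac{\delta}{2}x_1}\,e^{-\frac{\delta}{2}|x_2|}\quad(x_1<0),
\qquad
|F(x_1,x_2)|\le \tfrac{4c}{\delta}\,e^{-\frac{\delta}{2}|x_2|}\quad(x_1\in\R).
\end{equation*}
On the left region $\{x_1<0\}$ the first bound shows that $F\mathbf 1_{\{x_1<0\}}\in L^2(\R^2)$ with a $t$-independent norm, so Cauchy--Schwarz gives $\big|\int_{x_1<0}\ep F\big|\le \|\ep(t)\|_2\,\|F\mathbf 1_{\{x_1<0\}}\|_2\le C_1\alpha_0\,\|F\mathbf 1_{\{x_1<0\}}\|_2$, which is uniform in $t$.

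The main work is the right region $\{x_1>0\}$, where $F$ is only bounded and I must trade the missing $x_1$-decay of $F$ for the exponential $L^2$-decay of $\ep$ on the right provided by Corollary \ref{AM3}. I would partition $\{x_1>0\}$ into unit strips $S_n=[n,n+1)\times\R$, $n\ge0$, and on each strip use the second pointwise bound on $F$ together with Cauchy--Schwarz,
\begin{equation*}
\int_{S_n}|\ep F|\le \tfrac{4c}{\delta}\Big(\int_{S_n}\ep^2\Big)^{1/2}\Big(\int_{S_n}e^{-\delta|x_2|}\Big)^{1/2}\le \tfrac{4c}{\delta}\sqrt{\tfrac{2}{\delta}}\;\Big(\int_{\R}\int_{x_1>n}\ep^2\Big)^{1/2}.
\end{equation*}
By Corollary \ref{AM3} (with $x_0=n$) the last factor is at most $\sqrt{C}\,e^{-\frac{n}{4M}}$, and summing the geometric series $\sum_{n\ge0}e^{-n/(4M)}<\infty$ bounds $\int_{x_1>0}|\ep F|$ by a constant independent of $t$.

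Adding the two regions yields $|J(t)|\le M_0$ for all $t\ge0$, with $M_0$ depending only on $c,\delta,M$, the norm $\|F\mathbf 1_{\{x_1<0\}}\|_2$, and the constant from Corollary \ref{AM3}. The genuinely nontrivial input is the right-region estimate: all of the uniform-in-$t$ control there comes from the monotonicity statement in Corollary \ref{AM3}, which is exactly the ingredient that upgrades de Bouard's growing bound \eqref{Bound-J} to one independent of $t$. I therefore expect the right region to be the main obstacle, while the left region and the tracking of constants are routine.
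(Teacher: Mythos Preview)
Your proposal is correct and follows essentially the same approach as the paper: the same split into $\{x_1<0\}$ and $\{x_1>0\}$, the same pointwise bounds on $F$ from \eqref{F-decay}, and the same dyadic/unit-strip decomposition combined with Corollary~\ref{AM3} to sum a geometric series. The only cosmetic difference is that you apply Cauchy--Schwarz once on each two-dimensional strip $S_n$, whereas the paper iterates it first in $x_1$ and then in $x_2$; the resulting bounds are identical.
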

\begin{proof}
From the definition of $J(t)$ (see \eqref{def-JA}), we have that for all $t\geq 0$
\begin{align*}
|J(t)|\leq &\int_{\R}\int_{x_1\leq 0}\left|\ep(t,x_1,x_2)F(x_1,x_2)\right| dx_1dx_2+ \int_{\R}\int_{x_1>0}\left|\ep(t,x_1,x_2)F(x_1,x_2)\right| dx_1dx_2.
\end{align*}
Note that $F\chi_{\{x_1<0\}}\in L^2(\R^2)$ by \eqref{F-decay}, therefore, the first integral on the right hand side of the last inequality can be bounded as
$$
\int_{\R}\int_{x_1\leq 0}\left|\ep(t,x_1,x_2)F(x_1,x_2)\right| dx_1dx_2 \leq \|\ep(t)\|_2\|F\chi_{\{x_1<0\}}\|_2\leq c \, C_1\alpha_0,
$$
where in the last inequality we used \eqref{Bound_ep}.

On the other hand, again from \eqref{F-decay}, for every $x_2\in \R$ we have
$$
\sup_{x_1\in \R}|F(x_1,x_2)|\leq \frac{4c}{\delta} e^{-\frac{\delta}{2}|x_2|}.
$$
Therefore,
\begin{align*}
\int_{\R}\int_{x_1>0}\left|\ep(t,x_1,x_2)F(x_1,x_2)\right| dx_1dx_2
\leq & \int_{\R}\sup_{x_1\in \R}|F(x_1,x_2)|\left(\int_{x_1>0}\left|\ep(t,x_1,x_2)\right|dx_1\right)dx_2\\
\leq & \frac{4c}{\delta} \int_{\R} e^{-\frac{\delta}{2}|x_2|}\left(\int_{x_1>0}\left|\ep(t,x_1,x_2)\right|dx_1\right)dx_2.
\end{align*}
Furthermore,  using the Cauchy-Schwarz inequality in the $x_1$-variable, we get
\begin{align*}
\int_{x_1>0}\left|\ep(t,x_1,x_2)\right| dx_1= & \sum_{k=0}^{+\infty}\int_{k}^{k+1}\left|\ep(t,x_1,x_2)\right| dx_1\\
\leq & \sum_{k=0}^{+\infty}\left(\int_{k}^{k+1}\ep^2(t,x_1,x_2) dx_1\right)^{1/2}.
\end{align*}
Now, collecting the last two inequalities and using the Cauchy-Schwarz inequality in the $x_2$-variable, we obtain
\begin{align*}
\int_{\R}\int_{x_1>0}\left|\ep(t,x_1,x_2)F(x_1,x_2)\right| dx_1dx_2
\leq & \frac{4c}{\delta} \sum_{k=0}^{+\infty} \int_{\R} e^{-\frac{\delta}{2}|x_2|}\left(\int_{k}^{k+1}\ep^2(t,x_1,x_2) dx_1\right)^{1/2}dx_2\\
\leq & \frac{4c}{\delta} \sum_{k=0}^{+\infty} \left(\int_{\R} e^{-{\delta}|x_2|}dx_2\right)^{1/2}\left(\int_{\R}\int_{k}^{+\infty}\ep^2(t,x_1,x_2) dx_1dx_2\right)^{1/2}\\
\leq & \frac{4\sqrt{2}c}{\delta^{3/2}} C^{1/2}\sum_{k=0}^{+\infty} e^{-\frac{k}{4M}},
\end{align*}
assuming $\alpha_0>0$ is sufficiently small so that we can apply Corollary \ref{AM3} in the last inequality.
To complete the proof we take $M_0=cC_1\alpha_0+\frac{4\sqrt{2}c}{\delta^{3/2}} C^{1/2}\sum_{k=0}^{+\infty} e^{-\frac{k}{4M}}<+\infty$.
\end{proof}

The next theorem provides a strictly positive lower bound for $\ds \left|\frac{d}{dt}J(t)\right|$.
\begin{theorem}\label{Theo-J'}
If $\alpha_0>0$ is sufficiently small, then there exists a constant $a_0>0$ such that
\begin{equation*}
\left|\frac{d}{dt}J(t)\right|\geq a_0>0 \quad \textrm{for all} \quad t\geq 0.
\end{equation*}
\end{theorem}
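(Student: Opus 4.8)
The plan is to exploit the exact expression \eqref{J'}, namely $\frac{d}{dt}J = \beta\lambda_0(\ep,\chi_0) + K(\ep)$, and to show that its \emph{linear} principal part $\beta\lambda_0(\ep,\chi_0)$ is bounded below by a fixed multiple of $\|\ep(t)\|_2$, while the remainder $K(\ep)$ is bounded above by $C(\alpha_0)\|\ep(t)\|_2$ with $C(\alpha_0)\to 0$ as $\alpha_0\to0$; a separate uniform-in-$t$ lower bound $\|\ep(t)\|_2\ge c>0$ then closes the estimate. Recall $\beta>0$ here since $p>3$ (see \eqref{E:beta} and Lemma \ref{L-prop2}), and that $\ep(t)\perp Q_{x_1},Q_{x_2}$ with $y_2\equiv0$ throughout. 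First I would extract two consequences of the conservation laws for the fixed solution $u=u_{n(\alpha_0)}$. Since $M[u_{0,n}]=M[Q]$ by Proposition \ref{u0n-prop} and mass is conserved, expanding $M[Q+\ep]$ gives $\int Q\ep = -\tfrac12\|\ep\|_2^2$. Since $W=E+\tfrac12 M$ is conserved and $E[u_{0,n}]<E[Q]$, the quantity $d_0:=W[Q]-W[u_0]$ is \emph{fixed and strictly positive}, and the Weinstein expansion of Lemma \ref{Weinstein-Functional} yields $(L\ep,\ep)=-2d_0-2H[\ep]$ with $|H[\ep]|\le C\|\ep\|_{H^1}\|\ep\|_2^2\le CC_1\alpha_0\|\ep\|_2^2$ by \eqref{ep-control2}.

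These two identities produce both required bounds. On the one hand, the coercivity Lemma \ref{Lemma-ort2} gives $k_1\|\ep\|_2^2-k_2(\ep,\chi_0)^2\le(L\ep,\ep)\le 2|H[\ep]|\le 2CC_1\alpha_0\|\ep\|_2^2$; choosing $\alpha_0$ so small that $2CC_1\alpha_0\le k_1/2$ yields $k_2(\ep,\chi_0)^2\ge\tfrac{k_1}2\|\ep\|_2^2$, i.e. $|(\ep,\chi_0)|\ge\sqrt{k_1/(2k_2)}\,\|\ep\|_2$, so the principal term is \emph{comparable} to $\|\ep\|_2$. On the other hand, discarding the nonnegative gradient and $L^2$ pieces of $(L\ep,\ep)$ gives the crude lower bound $(L\ep,\ep)\ge-p\|Q\|_\infty^{p-1}\|\ep\|_2^2$, which, set against $(L\ep,\ep)=-2d_0-2H[\ep]\le-2d_0+2CC_1\alpha_0\|\ep\|_2^2$, forces $\|\ep(t)\|_2^2\ge 2d_0/(p\|Q\|_\infty^{p-1}+2CC_1\alpha_0)=:c^2>0$ uniformly in $t$.

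It then remains to absorb $K(\ep)$ from \eqref{K}, which with $y_2\equiv0$ reduces to $\int Q\ep-(y_1'-1)\int\ep(\Lambda Q+\beta\chi_0)-\int R(\ep)F$. The first term equals $-\tfrac12\|\ep\|_2^2$; the second is controlled by $|y_1'-1|\le C_2\|\ep\|_2$ (Lemma \ref{Lemma-param}) together with Cauchy--Schwarz, so it is $O(\|\ep\|_2^2)$; and for $\int R(\ep)F$ I would use $F\in L^\infty$, the pointwise bound \eqref{error}, the Gagliardo--Nirenberg inequality \eqref{GN}, and $\|\ep\|_{H^1}\le C_1\alpha_0\le1$ to bound each summand by $(C_1\alpha_0)^{k-1}\|\ep\|_2$. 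Hence $|K(\ep)|\le C(\alpha_0)\|\ep\|_2$ with $C(\alpha_0)\to0$. Combining everything, $|\tfrac{d}{dt}J|\ge\beta\lambda_0|(\ep,\chi_0)|-|K(\ep)|\ge\bigl(\beta\lambda_0\sqrt{k_1/(2k_2)}-C(\alpha_0)\bigr)\|\ep\|_2$; shrinking $\alpha_0$ so that the bracket exceeds $\tfrac12\beta\lambda_0\sqrt{k_1/(2k_2)}$ and then inserting $\|\ep\|_2\ge c$ gives the claim with $a_0=\tfrac12\beta\lambda_0\sqrt{k_1/(2k_2)}\,c>0$.

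The delicate point is the $\int R(\ep)F$ term: its summand $\int|\ep_{x_1}|\,|\ep|\,|F|$ is only quadratic in $\|\ep\|_{H^1}$, not genuinely quadratic in $\|\ep\|_2$, so it cannot be treated as a higher-order $L^2$ correction. The resolution---and the crux of the estimate---is that its coefficient carries the small factor $\|\ep\|_{H^1}\le C_1\alpha_0$, so it is still bounded by $C(\alpha_0)\|\ep\|_2$ and is swallowed by the linear term \emph{precisely because} $|(\ep,\chi_0)|$ is comparable to $\|\ep\|_2$. Equally indispensable is the strict inequality $E[u_{0,n}]<E[Q]$, that is $d_0>0$: it is what keeps $\|\ep(t)\|_2$ bounded away from $0$ uniformly in time, and it is exactly this ingredient that degenerates in the critical case $p=3$ (where moreover $\beta=0$), so the method correctly fails there. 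Finally, the bound $|\tfrac{d}{dt}J|\ge a_0>0$, together with the continuity of $t\mapsto\tfrac{d}{dt}J(t)$, forces $\tfrac{d}{dt}J$ to keep a constant sign, so $J$ grows without bound, contradicting \eqref{Bound-J2}.
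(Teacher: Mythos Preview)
Your proof is correct and follows essentially the same route as the paper: both use \eqref{J'}, the mass identity $\int Q\ep=-\tfrac12\|\ep\|_2^2$, the Weinstein expansion (Lemma \ref{Weinstein-Functional}) to evaluate $(L\ep,\ep)$, the coercivity Lemma \ref{Lemma-ort2}, and the same $K(\ep)$ estimate via \eqref{error}, \eqref{GN}, and \eqref{ControlParam}. The only cosmetic difference is that the paper bundles the coercivity and the energy gap into the single inequality $(\ep,\chi_0)^2\ge\frac{k_1}{2k_2}\|\ep\|_2^2+\frac{\delta_0}{k_2}$ (so strict positivity comes directly from the additive constant $\delta_0/k_2$), whereas you split this into $|(\ep,\chi_0)|\ge\sqrt{k_1/(2k_2)}\,\|\ep\|_2$ together with the independent lower bound $\|\ep\|_2\ge c>0$ obtained from the crude estimate $(L\ep,\ep)\ge-p\|Q\|_\infty^{p-1}\|\ep\|_2^2$.
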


\begin{proof}
Let $\alpha_0<\min\{\alpha_1, \alpha_2\}$, so that we can apply Lemmas \ref{ModThI} and \ref{Lemma-param}. In view of \eqref{J'}, we have
\begin{equation}\label{J'2}
\frac{d}{dt}J=\beta \lambda_0 \int \ep\chi_0 +K(\ep),
\end{equation}
where, since $y_2(t)=0$ for all $t\geq 0$,
\begin{equation*}
K(\ep)=\int Q\ep - (y_1'-1)\int \ep (\Lambda Q+\beta \chi_0)-\int R(\ep)F
\end{equation*}
and $R(\ep)$ satisfy \eqref{error}. We estimate the terms in $K(\ep)$ separately. First, observe that $\|\ep(t)\|\leq 1$ by \eqref{ep-control2}, if $\alpha_0<(2C_1)^{-1}$. Hence, from the Gagliardo-Nirenberg inequality \eqref{GN}, we deduce
\begin{align*}
\|R(\ep)\|_1\leq &C_0\sum_{k=2}^p(\|\ep\|_k^k+\|\ep_{x_1}\|_2\|\ep\|^{k-1}_{2(k-1)})\\
\leq & c\, C_0\sum_{k=2}^p(\|\nabla \ep\|_2^{k-2}\|\ep\|_2^2+\|\ep_{x_1}\|_2\|\nabla \ep\|_2^{k-2}\|\ep\|_2)\\
\leq &C_5 \| \ep\|_2\|\ep\|_{H^1},
\end{align*}
where $C_5=2pcC_0$.
Thus,
\begin{equation}\label{K1}
\left|\int R(\ep)F\right|\leq C_5\|F\|_{\infty}\| \ep\|_2\|\ep\|_{H^1}.
\end{equation}
On the other hand, from \eqref{ControlParam} and Cauchy-Schwarz inequality
\begin{equation*}
\left|(y_1'-1)\int \ep (\Lambda Q+\beta \chi_0)\right|\leq C_2\|\ep\|_2\left|\int \ep (\Lambda Q+\beta \chi_0)\right|\leq C_2\|\Lambda Q+\beta \chi_0\|_{2}\|\ep\|_2\|\ep\|_{H^1}.
\end{equation*}
Finally, the mass conservation \eqref{MC} for the solution $u$, the definition of $\ep$ \eqref{eq-ep2} and the relation \eqref{u0n-rel} imply
$$
\int Q^2 = \int u_0 = \int u(t)= \int Q^2 +2\int \ep Q +\int \ep^2
$$
and so
\begin{equation}\label{K3}
\left|\int \ep Q \right|\leq \frac12 \|\ep\|^2_2.
\end{equation}
Collecting \eqref{K1}-\eqref{K3}, there exists a universal constant (depending only on $p$) $C_6>0$ such that
\begin{equation}\label{K4}
\left| K(\ep) \right|\leq C_6 \|\ep\|_2\|\ep\|_{H^1}.
\end{equation}

Now, let
$$
\theta(t)=\int \ep(t)\chi_0.
$$
From Lemma \ref{Lemma-ort2} we deduce
$$
\theta^2(t)\geq \frac{k_1}{k_2}\|\ep(t)\|^2_2-\frac{1}{k_2}(L\ep(t),\ep(t))
$$
In order to estimate $(L\ep(t), \ep(t))$, we invoke Lemma \ref{Weinstein-Functional} to get
$$
W[u_0]=W[u(t)]= W[Q+\ep(t)]=W[Q]+\frac12 (L\ep(t), \ep(t))+H[\ep(t)]
$$
and then from the definition of the Weinstein's Functional $W$, we get
\begin{align*}
(L\ep(t), \ep(t))=& 2(E[u_0]-E[Q])+(M[u_0]-M[Q])-2H[\ep(t)]\\
= & -\delta_0-2H[\ep(t)],
\end{align*}
where $\delta_0>0$ by Proposition \ref{u0n-prop}. Thus,
\begin{align}\label{theta}
\theta^2(t)\geq & \frac{k_1}{k_2}\|\ep(t)\|^2_2+\frac{1}{k_2}(\delta_0+2H[\ep(t)])\nonumber \\
\geq & \frac{k_1}{2k_2}\|\ep(t)\|^2_2+\frac{\delta_0}{k_2},
\end{align}
where in the last line we chose $\alpha_0<k_1(4C'C_1k_2)^{-1}$ in order to use inequalities \eqref{H} and \eqref{ep-control2}.

Therefore, $\theta^2(t)$ is a strictly positive number for all $t\in \R$, and hence, the sign of $\theta(t)$ remains the same during the evolution. Let's assume that $\theta(t)$ is positive, then \eqref{theta} implies
$$
\theta(t)\geq \sqrt{ \frac{k_1}{2k_2}\|\ep(t)\|^2_2+\frac{\delta_0}{k_2}}\geq c \left(\sqrt{\frac{k_1}{2k_2}}\|\ep(t)\|_2+ \sqrt{\frac{\delta_0}{k_2}}\right).
$$
Plugging the last inequality in \eqref{J'2}, we obtain
$$
\frac{d}{dt}J\geq c \beta\lambda_0\sqrt{\frac{k_1}{2k_2}}\|\ep(t)\|_2 +c \beta\lambda_0\sqrt{\frac{\delta_0}{k_2}} +K(\ep).
$$
Finally, from \eqref{K4} we can choose $\alpha_0>0$, sufficiently small, such that
$$
\frac{d}{dt}J\geq \frac{c \beta\lambda_0}{2}\sqrt{\frac{k_1}{2k_2}}\|\ep(t)\|_2 +c \beta\lambda_0\sqrt{\frac{\delta_0}{k_2}}\geq c \beta\lambda_0\sqrt{\frac{\delta_0}{k_2}}>0.
$$
If $a(t)$ is negative, then arguing as above, we can show that for $\alpha_0>0$, sufficiently small, there exists $a_0>0$ such that $\ds \frac{d}{dt}J(t)\leq-a_0<0$, concluding the proof of Theorem \ref{Theo-J'}.
\end{proof}

\begin{proof}[Proof of Theorem \ref{Theo-Inst}]
If $\ds \frac{d}{dt}J(t)\geq a_0>0$, then integrating in $t$ variable both sides, we get
$$
J(t)\geq a_0t + J(0) \quad \textrm{for all} \quad t\geq 0,
$$
which is a contradiction with \eqref{Bound-J2}, the boundedness of $J(t)$ from above. The case when $J'(t)\leq -a_0<0$ goes absolutely similar.
\end{proof}

\bibliographystyle{amsplain}

\end{document}